\newtheorem{thm}{Theorem}[section]
\newtheorem{prop}[thm]{Proposition}
\newtheorem{lem}[thm]{Lemma}
\newtheorem{defn}[thm]{Definition}
\newtheorem{expl}[thm]{Example}
\newtheorem{rmk}[thm]{Remark}
\numberwithin{equation}{section}
\newcommand{\lra}{\longrightarrow}
\newcommand{\mcalo}{\mathcal{O}}
\newcommand{\mcalc}{\mathcal{C}}
\newcommand{\om}{\mathcal{O}(M)}
\newcommand{\okm}{\mathcal{O}_k(M)}
\newcommand{\mcalf}{\mathcal{F}}
\newcommand{\mcald}{\mathcal{D}}
\newcommand{\mcalm}{\mathcal{M}}
\newcommand{\mcalp}{\mathcal{P}}
\newcommand{\mcalb}{\mathcal{B}}
\newcommand{\mcall}{\mathcal{L}}
\newcommand{\mcalu}{\mathcal{U}}
\newcommand{\hra}{\hookrightarrow}
\newcommand{\lla}{\longleftarrow}
\newcommand{\ra}{\rightarrow}
\newcommand{\Bt}{\widetilde{\mcalb}}
\newcommand{\mcalh}{\mathcal{H}}
\newcommand{\mcalw}{\mathcal{W}}
\newcommand{\mcals}{\mathcal{S}}
\newcommand{\Sp}{\widetilde{\text{sp}}}
\newcommand{\spim}{\widetilde{\text{sp}}^{\infty}(M)}
\newcommand{\mcalbb}{\overline{\mcalb}}
\newcommand{\spkm}{\widetilde{\text{sp}}^k(M)}
\newcommand{\mcalpa}{\mcalp_{k\abb}}
\newcommand{\mcalfa}{\mcalf_{\abb}}
\newcommand{\mcalla}{\mcall_{\abb}}
\newcommand{\mcalfb}{\overline{\mcalf}}
\newcommand{\mcalfab}{\overline{\mcalf}_{\abb}}
\newcommand{\stk}{\sim_k}
\newcommand{\Bb}{\overline{\mcalb}}
\newcommand{\Bpb}{\overline{\mcalb'}}
\newcommand{\Bbspkm}{\mcalbb(\spkm)}
\newcommand{\Lb}{\overline{L}}
\newcommand{\hfib}{\text{hofiber}}
\newcommand{\abb}{\mathbb{A}}
\newcommand{\Ah}{\widehat{\mathbb{A}}}
\title{ \textbf{Classification of polynomial functors in manifold calculus, Part I}}
\date{}
\author{Paul Arnaud Songhafouo Tsopm\'en\'e \\
Donald Stanley}
\begin{document}
\maketitle

\begin{abstract}  
For a smooth manifold $M$, we define a topological space $\spkm$, and show that  polynomial functors $\om \to \mcalm$ of degree $\leq k$ from the poset of open subsets of $M$ to a simplicial model category can be classified by a version of linear functors from $\mcalo(\spkm)$ to $\mcalm$. 
\end{abstract}

\tableofcontents


\section{Introduction}

\sloppy

Let $M$ be a smooth manifold, and let $\om$ be the poset of open subsets of $M$ ordered by inclusion.  Manifold calulus, as defined in \cite{wei99}, is the study of good contravariant functors from $\om$ to spaces. A good functor turns isotopy equivalences into weak equivalences. Being a calculus of functors, the idea of manifold calculus is to try to approximate a given functor by functors that are \textit{polynomial} (see Definition~\ref{defn:PF}). 
This paper is concerned with the study of polynomial functors. 

Such functors were studied by some people including  Weiss \cite{wei99}, Pryor \cite{pryor15} and the authors \cite{paul_don17-2}.  In \cite[Theorem 6.1]{wei99}, Weiss characterizes polynomial functors of degree $\leq k$. Specifically, he shows that these are determined by their values on $\mcalo_k(M)$, the full subposet of $\om$ whose objects are open subsets diffeomorphic to the disjoint union of at most $k$ balls.  In the same spirit as Weiss,  Pryor \cite[Theorem 6.12]{pryor15} generalizes this characterization of a polynomial functor $F \colon \om \to \text{Top}$ of degree $\leq k$ by its restriction to a full  subposet $\mcalb_k(M) \subseteq \okm$ defined as follows. Let $\mcalb$ be a basis for the topology of $M$. Each element of $\mcalb$ is required to be diffeomorphic to an open ball. Such a $\mcalb$ is called \textit{good basis}.  The objects of $\mcalb_k(M)$ are  exactly those which are a union of at most $k$ pairwise disjoint elements of $\mcalb$. In \cite[Theorem 1.1]{paul_don17-2}, the authors show that Pryor's result holds for a larger class of functors, namely those from $\om$ to an arbitrary simplicial model category $\mcalm$. 


In this paper we classify polynomial functors $F \colon \om \to \mcalm$ of degree $\leq k$ by a version of linear functors  from the poset $\mcalo(\spkm)$ of open subsets of a topological space $\spkm$ to $\mcalm$. The space $\spkm$ is a variation of the $k$-fold symmetric power $\text{sp}^k(M):= M^k \slash \Sigma_k$. Specifically,  $\spkm$  is defined to be the quotient of $M^k$ by some equivalence relation $\sim_k$  (see Definition~\ref{defn:spkm}).   
We use $\spkm$ (instead of $\text{sp}^k(M)$) because it has better properties for our purposes. The first is that for every $k$, there is a canonical injection $\spkm \to \Sp^{k+1}(M)$ which allows us to define the space $\spim$ as the colimit of $\Sp^0(M) \to \Sp^1(M) \to \cdots$.  This latter space is useful when dealing with analytic functors. The second property is that $\spkm \backslash \Sp^{k-1}(M)$ is equal to $F_k(M)$, the unordered configuration space of $k$ points in $M$, which is used in \cite{paul_don18, wei99} to classify homogeneous functors.  

The notion of linear functor $F \colon \mcalo(\spkm) \to \mcalm$ we consider in this paper can be briefly defined as follows. 
First,  we define an important full suposet $\Bb(\spkm) \subseteq \mcalo(\spkm)$ out of a good basis $\mcalb$. Roughly speaking, an object of $\Bb(\spkm)$ is a quotient $B_1 \times \cdots \times B_k \slash \sim_k$ where each $B_i$ belongs to $\mcalb$, and for every pair $\{i, j\}$ either $B_i = B_j$ or $B_i \cap B_j = \emptyset$ (see Definition~\ref{defn:mcalbb}). A functor $F \colon \mcalo(\spkm) \to \mcalm$ is called linear if it is determined by its values on $\Bb(\spkm)$ (see Definition~\ref{defn:blinear}). As we will see below, the poset $\Bb(\spkm)$ (together with $\mcalb_k(M)$) is useful to relate polynomial functors $\om \to \mcalm$ of degree $\leq k$ to linear functors $\mcalo(\spkm) \to \mcalm$. There is also a notion of goodness for functors out of $\mcalo(\spkm)$ (see Definition~\ref{defn:bgood}).

We now proceed with the statement of our results. Let  $\mcall^{\mcalb} (\mcalo(\spkm); \mcalm)$ denote the category of good linear functors $\mcalo(\spkm) \to \mcalm$ (see  Definition~\ref{defn:mcallb}). The superscript $\mcalb$ is meant to indicate that this category depends on $\mcalb$. Actually, this is not the case as  we prove Theorem~\ref{thm:lb_lbp} which states that given any other good basis $\mcalb'$, the category $\mcall^{\mcalb} (\mcalo(\spkm); \mcalm)$ is equal to $\mcall^{\mcalb'} (\mcalo(\spkm); \mcalm)$. 
Thanks to this, one can drop the superscript $\mcalb$. 
For our purposes in the follow-up paper, we also consider a full subcategory $\mcall_{\abb} (\mcalo(\spkm); \mcalm)$ of  $\mcall (\mcalo(\spkm); \mcalm)$, where $\abb = \{A_0, \cdots, A_k\}$ is a sequence of objects of $\mcalm$. The objects of $\mcall_{\abb} (\mcalo(\spkm); \mcalm)$ are functors $F$ such that for every $V \in \Bb(\spkm)$, $F(V) \simeq A_i$ whenever the \textit{degree} of $V$ is equal to $i$. The degree of $V = B_1 \times \cdots \times B_k \slash \sim_k$ is defined to be the number of components of $B_1 \cup \cdots \cup B_k$ (see Definition~\ref{defn:degree}). Now let $\mcalp_k(\om; \mcalm)$ be the category of  (good) polynomial functors $F \colon \om \to \mcalm$ of degree $\leq k$ (see Definition~\ref{defn:mcalpa}), and let $\mcalp_{k\abb} (\om; \mcalm)$ be the full subcategory of  $\mcalp_k(\om; \mcalm)$ whose objects are functors $F$ having the property that $F(U) \simeq A_i$ whenever $U$ is diffeomorphic to the disjoint union of exactly $i$ open balls (not necessarily in $\mcalb$). Define a functor 
\[
\Theta \colon \mcalp_k(\om; \mcalm) \to \mcall(\mcalo(\spkm); \mcalm)
\]
as $\Theta (F):= (\Phi(F))^!$, where $\Phi(F)$ is a functor from $\Bb(\spkm)$ to $\mcalm$ defined by $\Phi(F)(V) := F \lambda (V)$. Here $(-)^!$ stands for the homotopy right Kan extension functor, while $\lambda \colon \Bb(\spkm) \to \mcalb_k(M)$ is the projection functor $\lambda (V):= B_1 \cup \cdots \cup B_k$ (see Definition~\ref{defn:lamb_theta}).  
By definition, $\Theta(F) \in \mcalla(\mcalo(\spkm); \mcalm)$ whenever $F$ belongs to $\mcalpa(\om; \mcalm)$. This defines a functor 
\[
\Theta_{\abb} \colon \mcalp_{k\abb}(\om; \mcalm) \to \mcall_{\abb}(\mcalo(\spkm); \mcalm), \quad F \mapsto \Theta (F). 
\]
 
The following is the main result of this paper. 



\begin{thm}  \label{thm:main}
	Let $M$ be a smooth manifold, and let $\mcalm$ be a simplicial model category. Let $\abb = \{A_0, \cdots, A_k\}$ be a sequence of objects of $\mcalm$. Then the functors $\Theta$ and $\Theta_{\abb}$ above are both weak equivalences (in the sense of Definition~\ref{defn:we}). 
\end{thm}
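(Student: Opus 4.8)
The plan is to deduce Theorem~\ref{thm:main} from the already-established comparison between polynomial functors $\om \to \mcalm$ of degree $\leq k$ and their restrictions to $\mcalb_k(M)$ (the authors' \cite[Theorem 1.1]{paul_don17-2}, together with Weiss's \cite[Theorem 6.1]{wei99}), by factoring the functor $\Theta$ through the intermediate category of functors on $\mcalb_k(M)$. Concretely, I would introduce the restriction functor $\text{res} \colon \mcalp_k(\om; \mcalm) \to \mcalf(\mcalb_k(M); \mcalm)$ (landing in a suitable subcategory of "good" functors on $\mcalb_k(M)$), which is a weak equivalence by the cited results, and observe that $\Theta$ is the composite of $\text{res}$ with the functor $\mcalf(\mcalb_k(M); \mcalm) \to \mcall(\mcalo(\spkm); \mcalm)$ sending $G \mapsto (G \circ \lambda)^!$. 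So the real content is that this second functor is a weak equivalence; by two-out-of-three, $\Theta$ then is too, and the $\abb$-decorated version follows by restricting the equivalence to the full subcategories cut out by the conditions $F(U) \simeq A_i$ (these conditions correspond to each other under the equivalence because $\lambda$ sends a degree-$i$ object of $\Bb(\spkm)$ to a disjoint union of exactly $i$ balls).

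To handle $G \mapsto (G \circ \lambda)^!$, I would argue in two stages. First, precomposition with $\lambda \colon \Bb(\spkm) \to \mcalb_k(M)$ is essentially an equivalence onto its image at the level of the indexing posets: $\lambda$ is surjective, and the key point is that the fibers of $\lambda$ are "contractible enough" in the relevant sense — the set of $V = B_1 \times \cdots \times B_k \slash \sim_k$ with $\lambda(V) = B_1' \cup \cdots \cup B_j'$ (a union of $j$ disjoint balls, $j \leq k$) is acted on transitively up to the combinatorics of which factor maps to which ball, and the nerve of this fiber category is weakly contractible. This should let me conclude that $G \mapsto G \circ \lambda$ induces a weak equivalence between $\mcalf(\mcalb_k(M); \mcalm)$ (good functors) and $\mcalf^{\mcalb}(\Bb(\spkm); \mcalm)$. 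Second, the homotopy right Kan extension $(-)^!$ along $\Bb(\spkm) \hookrightarrow \mcalo(\spkm)$ is, by the definition of \emph{linear} functor (Definition~\ref{defn:blinear}) and of $\mcall(\mcalo(\spkm); \mcalm)$ (Definition~\ref{defn:mcallb}), precisely the inverse equivalence to restriction $\mcall(\mcalo(\spkm); \mcalm) \to \mcalf^{\mcalb}(\Bb(\spkm); \mcalm)$ — this is exactly the content of "a linear functor is determined by its values on $\Bb(\spkm)$", made homotopically precise. Here I would need Theorem~\ref{thm:lb_lbp} to know the target category is independent of the choice of $\mcalb$, so that the whole construction is well-defined, and I would lean on the general machinery for homotopy Kan extensions in simplicial model categories (as in \cite{paul_don17-2}).

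The main obstacle I anticipate is the first stage: showing that precomposition with $\lambda$ is a weak equivalence of functor categories, i.e. that the homotopy right Kan extension along $\lambda$ (or a cofinality/Quillen Theorem A type argument) is available. The subtlety is that $\lambda$ is far from an isomorphism — a single object $B_1' \cup \cdots \cup B_j'$ of $\mcalb_k(M)$ has many preimages recording how the $k$ slots are distributed among the $j$ balls and quotiented by $\sim_k$ — so one must verify that the over-categories (or comma categories) $\lambda \downarrow W$ for $W \in \mcalb_k(M)$ have weakly contractible nerves, and that this contractibility is compatible with the "good/isotopy" structure so that it passes to functors valued in $\mcalm$ rather than just to spaces. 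I would expect to prove this by exhibiting an explicit terminal-up-to-homotopy object or a zig-zag of natural transformations contracting $\lambda \downarrow W$, using that each $B_i$ in a fiber is either equal to or disjoint from each $B_j$, which rigidifies the combinatorics enough to build such a contraction. Once this cofinality statement is in hand, combined with the standard fact that restriction along a full inclusion is inverse to homotopy right Kan extension on the subcategory of functors that are "right Kan extended" (which is the definition of linearity), the rest is formal two-out-of-three bookkeeping, including the passage to the $\abb$-indexed subcategories.
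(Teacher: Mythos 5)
Your overall decomposition is essentially the same as the paper's: you correctly factor $\Theta$ as $(-)^!\circ(\text{precompose with }\lambda)\circ\text{res}$, correctly invoke \cite[Theorem 1.1]{paul_don17-2} for the $\text{res}$ part, and correctly identify the $(-)^!$ part as the formal inverse to restriction on linear functors (this is the paper's Lemma~\ref{lem:mcall_mcalfab}). But the middle step --- that precomposition with $\lambda$ is a weak equivalence of functor categories --- is where you diverge from the paper and where your plan has a real gap.

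You propose to prove it by a cofinality argument: show that the comma categories $\lambda\downarrow W$ have contractible nerves and deduce that $G\mapsto G\circ\lambda$ is an equivalence. You yourself flag this as the main obstacle, and rightly so, for two reasons. First, contractibility of comma categories gives you that the homotopy right Kan extension along $\lambda$ composed with restriction is equivalent to the identity, which is a statement about $\text{holim}$'s; it does not by itself produce the inverse functor $G$ with explicit zigzags $FG\simeq\mathrm{id}$ and $GF\simeq\mathrm{id}$ that Definition~\ref{defn:we} requires. Second, you would then also have to check that the Kan extension along $\lambda$ preserves the ``good/isotopy'' subcategories, which is a separate verification.

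The paper avoids all of this by exhibiting an explicit section $\theta\colon\mcalb_k(M)\to\Bb(\spkm)$, $\theta(U)=U^k/\sim_k$, which you do not mention. The key facts are that $\lambda\theta=\mathrm{id}$ on the nose (Remark~\ref{rmk:lt=id}), so $\Psi\Phi=\mathrm{id}$ strictly where $\Psi=(-)\circ\theta$ and $\Phi=(-)\circ\lambda$; and that for each $V$ the inclusion $\tau_V\colon V\hookrightarrow\theta\lambda(V)$ is, by construction, a morphism of $\mcalw_{\Bb(\spkm)}$ (Definition~\ref{defn:mcalw_Bb}(b)), hence is sent to a weak equivalence by any $F\in\mcalfb(\Bb(\spkm);\mcalm)$. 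This gives a one-step natural weak equivalence relating $\Phi\Psi$ to $\mathrm{id}$, and Lemma~\ref{lem:mcalbb} is then a two-line proof. In short: the piece you identified as hard is hard only because you are not using $\theta$; once $\theta$ is in play, the fibers-of-$\lambda$ question evaporates, and the weak equivalence in the sense of Definition~\ref{defn:we} comes with its inverse functor and zigzags built in. Your second stage (the $(-)^!$/restriction adjunction, and the reduction to the $\abb$-decorated subcategories) matches the paper and is fine.
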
 
Using the terminology of Definition~\ref{defn:we}, Theorem~\ref{thm:main} implies that the category of polynomial functors $\om \to \mcalm$ of degree $\leq k$ is weakly equivalent to the category of linear functors $\mcalo(\spkm) \to \mcalm$. This is interesting because it is easier to work with linear functors.

The strategy of the proof of Theorem~\ref{thm:main} is to construct a functor 
\[
\Lambda \colon \mcall(\mcalo(\spkm); \mcalm) \to \mcalp_k(\om; \mcalm),
\]
in the other direction and show that it has the required properties. 
The definition of $\Lambda$ is simple:  $\Lambda(F) := (\Psi(F))^!$, where $\Psi(F) \colon  \mcalb_k(M) \to \mcalm$ is given by $\Psi(F) := F \theta$. Here $\theta$ is a functor from $\mcalb_k(M)$ to $\Bb(\spkm)$ defined as $\theta(U) := U^k \slash \sim_k$. 

We end this introduction with a couple of remarks, a fact about homogeneous functors, and what we plan to do in the follow-up paper. 

\begin{rmk}
	Theorem~\ref{thm:main} also holds for analytic functors, that is, for functors $F \colon \om \to \mcalm$ that are determined by their values on $\mcalb_{\infty}(M) := \cup_{k \geq 0} \mcalb_k(M)$. In fact, using the same strategy as above, one can show that the category of such functors is weakly equivalent to the category of linear functors $\mcalo(\spim) \to \mcalm$. As before, an object of this latter category is determined by a functor $\Bb(\spim) \to \mcalm$, where $\Bb(\spim):= \cup_{k \geq 0} \Bb(\spkm)$. 
\end{rmk}	
 
 \begin{rmk}
 	Associated with a functor $F \colon \om \to \mcalm$ is its Taylor tower $\{T_rF\}_{r \geq 0}$, where $T_rF$ is the $r$th polynomial approximation to $F$. By definition, $T_rF$ is the homotopy right Kan extension of $F \colon \mcalb_r(M) \to  \mcalm$ along the inclusion $\mcalb_r(M) \hra \om$. So $T_rF$ is determined by the restriction $F|\mcalb_r(M)$. The analog of this for functors out of $\mcalo(\spim)$ is what we expect: the Taylor tower of $F \colon \mcalo(\spim) \to \mcalm$ is given by the restrictions $F |\Bb(\spkm)$.  
 \end{rmk} 


Given $F \colon \om \to \mcalm$, the difference between $T_rF$ and $T_{r-1}F$, or more precisely the homotopy fiber of the canonical map $T_rF \to T_{r-1}F$, is referred to as the $r$th homogeneous layer of $F$. Section~\ref{section:hl}, which is the last section of this paper, defines  the analog of homogeneous layer for functors out of $\mcalo(\spkm)$. Specifically, given $F \in \mcall(\mcalo(\spkm); \mcalm)$ we define a new functor $\Lb_r(F) \in \mcall(\mcalo(\spkm); \mcalm), r \leq k,$ and prove Theorem~\ref{thm:hl} which roughly says that $\Lambda(\Lb_r(F)) \simeq L_r(\Lambda (F))$.  (Note that our definition of $\Lb_r(F)$ does not involve $\Theta$ or $\Lambda$ mentioned above.)


In work in progress, the authors are trying to construct a filtered space $\widehat{\abb}$, out of $\abb = \{A_0, \cdots, A_k\}$, that classifies the objects of $\mcalp_{k\abb}(\om; \mcalm)$. Specifically, we plan to show that weak equivalence classes of such objects are in one-to-one correspondence with filtered homotopy classes  of maps  $\spkm \to \widehat{\abb}$. The first step of this is to construct an equivalence between $\mcalp_{k\abb}(\om; \mcalm)$ and $\mcall_{\abb}(\mcalo(\spkm); \mcalm)$, which is nothing but Theorem~\ref{thm:main} above. The next step, which is the purpose of the second part of this paper,  is to relate linear functors on $\mcalo(\spkm)$ to maps $\spkm \to \Ah$ using the basic outline of $\cite{paul_don18}$. The piecewise linear topology involved is more complicated.

\textbf{Outline.} 
In Section~\ref{sec:spkm}, we  define the space $\spkm$. Then, in Section~\ref{sec:mcalp_mcalfb}, we define the  poset $\Bb(\spkm)$ and the functors $\lambda \colon \Bb(\spkm) \to \mcalb_k(M)$ and  $\theta \colon \mcalb_k(M) \to \Bb(\spkm)$ which allow us transport functors $\mcalb_k(M) \to \mcalm$ to functors $\mcalo(\spkm) \to \mcalm$ and vice versa. An important category $\mcalfb(\Bb(\spkm); \mcalm)$  of functors out of $\Bb(\spkm)$ is introduced. We also define the category $\mcalp_k(\om; \mcalm)$ of polynomial functors of degree $\leq k$, and show that it is weakly equivalent to $\mcalfb(\Bb(\spkm); \mcalm)$. Section~\ref{sec:linear_func} defines, out of a good basis $\mcalb$, the category $\mcall^{\mcalb}(\mcalo(\spkm); \mcalm)$ of linear functors from $\mcalo(\spkm)$. The main result here, Theorem~\ref{thm:lb_lbp}, says that this latter category does not depend on the choice of $\mcalb$. The proof of this  occupies the rest of the section. 
In Section~\ref{sec:main_thm}, we show that the category $\mcalfb(\Bb(\spkm); \mcalm)$ is weakly equivalent to $\mcall(\mcalo(\spkm); \mcalm)$, which completes the proof of Theorem~\ref{thm:main}. Section~\ref{section:hl} deals with homogeneous layers as we just mentioned.

\textbf{Notation, convention, etc.} Throughout this paper the letter $M$ stands for a smooth manifold, while $\mcalm$ stands for a simplicial model category. A good basis $\mcalb$ for the topology of $M$ is fixed (see Definition~\ref{defn:gb_mcalbk}).  For $A_0, \cdots, A_k \in \mcalm$, we let $\abb$ denote the set $\{A_0, \cdots, A_k\}$. The term \lq\lq cofunctor\rq\rq{} means contravariant functor, while the term  \lq\lq functor\rq\rq{} is used for  covariant functors.  As usual, weak equivalences in the category of (co)functors into $\mcalm$ are natural transformations which are objectwise weak equivalences.

\textbf{Acknowledgment.}  This work has been supported  the Natural Sciences and Engineering Research Council of Canada (NSERC), that the authors acknowledge.

\section{The space $\spkm$}   \label{sec:spkm}

This section is dedicated to the definition of the space $\spkm$ that appears in the introduction. Roughly speaking, $\spkm$ is a variation of the  $k$-fold symmetric power $M^k \slash \Sigma_k$ (see Definition~\ref{defn:spkm}). The collection $\{\spkm\}_{k \geq 0}$ comes equipped with canonical injections $\Sp^{k-1}(M) \to \spkm$, and this enables us to define  $\Sp^{\infty}(M)$ as the colimit of the obvious diagram. 


Let $\Delta$ be the standard simplicial category. Recall that objects of $\Delta$ are $[k] = \{0, \cdots, k\}, k \geq 0,$ and morphisms are non-decreasing maps. 
Define a simplicial object $X_{\bullet}^M \colon \Delta \to \text{Top}$ in topological spaces as follows. 
\[
X^M_k := M^{k+1} = \underbrace{M \times \cdots \times M}_{k+1}.
\]
The set $X^M_k$ is endowed with the product topology. The face and degeneracy maps are defined by forgetting and repeating points.  Specifically, for $0 \leq i \leq k$, define $d_i \colon X_k^M \to X_{k-1}^M$ as 
\[
d_i(x_1, \cdots, x_{k+1}) := (x_1, \cdots, \widehat{x_{i+1}}, \cdots, x_{k+1}),
\] 
where  ``$\widehat{x_{i+1}}$'' means taking out $x_{i+1}$.  For $0 \leq j \leq k$, define $s_j \colon X_k^M \to X_{k+1}^M$ as 
\[
s_j(x_1, \cdots, x_{k+1}) := (x_1, \cdots, x_j, x_{j+1}, x_{j+1}, x_{j+2}, \cdots, x_{k+1}). 
\]
From the definition, one can easily see that a non-degenerate $k$-simplex of $X_{\bullet}^M$ is an ordered configuration of $k+1$ points in $M$. If $f$ is a morphism of $\Delta$, we write $f^*$ for $X_{\bullet}^M(f)$. 

\begin{defn} \label{defn:sim_sp}
	For $k \geq 1$, define $\stk$ as the equivalence relation on $M^k = X_{k-1}^M$ generated by $x =(x_1, \cdots, x_k)$ $\stk$ $(y_1, \cdots, y_k) =y$ if and only if
	\begin{enumerate}
		\item[$\bullet$] either there is a permutation $\sigma \in \Sigma_k$ on $k$ letters such that $y_i = x_{\sigma_i}$ for all $i$,
		\item[$\bullet$] or there is a non-degenerate simplex $z \in X_{l-1}^M = M^l$, for some $l \leq k$, and non-decreasing surjections $f_x, f_y \colon [k-1] \to [l-1]$ such that $f_x^*(z) = x$ and $f_y^*(z) = y$. 
	\end{enumerate}  
\end{defn}

\begin{expl}
	Let $k =3$, and let $x_1, x_2 \in M$.
	\begin{enumerate}
		\item[(i)] One has $(x_1, x_1, x_2) \  \stk \  (x_1, x_2, x_1) \  \stk \ (x_2, x_1, x_1)$.
		\item[(ii)] Suppose $x_1 \neq x_2$. One has $(x_1, x_1, x_2) \  \stk \ (x_1, x_2, x_2)$ since $z = (x_1, x_2) \in X_1^M = M^2$ is non-degenerate, and $s_0(z) = (x_1, x_1, x_2)$ and $s_1(z) = (x_1, x_2, x_2)$.
		\item[(iii)] Combining (i) and (ii), one has for example $(x_1, x_2, x_1) \  \stk \ (x_1, x_2, x_2)$. 
	\end{enumerate}  
\end{expl}

\begin{defn}  \label{defn:spkm}
	Consider the equivalence relation $\stk$ from Definition~\ref{defn:sim_sp}.  For $k \geq 0$, define 
	\[
	\spkm := \left\{ \begin{array}{ccc}
	\emptyset  &  \text{if}  & k = 0  \\
	M^k \slash \stk  &  \text{if}  & k \geq 1.
	\end{array}  \right.
	\]
\end{defn}

We endow $\spkm, k \geq 1,$ with the quotient topology. 

\begin{rmk} 
	From the definition, one has $\Sp^1(M) = M$, $\Sp^2(M) = \text{sp}^2(M)$, where $\text{sp}^k(M):= M^k \slash \Sigma_k$ is the standard $k$-fold symmetric power of $M$. For higher $k$'s, $\spkm$ is different from  $\text{sp}^k(M)$. In fact, $\spkm$ can be viewed as some quotient of $\text{sp}^k(M)$.
\end{rmk} 

There are maps $\varphi_k \colon \Sp^{k-1}(M) \to \spkm$ defined as 
\[
\varphi_k[(x_1, x_2,  \cdots, x_{k-1})] := [(x_1, x_1, x_2, \cdots, x_{k-1})].
\]
The space $\spkm$ is related to the  unordered configuration space $F_k(M)$ of $k$ points in $M$ as follows. 

\begin{prop}
	For $k \geq 1$, the map $\varphi_k \colon \Sp^{k-1}(M) \to \spkm$ is injective. Moreover, if we take $\varphi_k(\Sp^{k-1}(M)) \cong \Sp^{k-1}(M)$ away from $\spkm$, we left with  $F_k(M)$. That is,
	\[
	\spkm \backslash \Sp^{k-1}(M) = F_k(M). 
	\]
\end{prop}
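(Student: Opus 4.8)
The statement has two parts: injectivity of $\varphi_k$, and the set-theoretic identity $\spkm \backslash \Sp^{k-1}(M) = F_k(M)$. I would handle injectivity first. Suppose $\varphi_k[(x_1,\dots,x_{k-1})] = \varphi_k[(y_1,\dots,y_{k-1})]$, i.e. $(x_1,x_1,x_2,\dots,x_{k-1}) \stk (y_1,y_1,y_2,\dots,y_{k-1})$ in $M^k$. Both of these points are visibly degenerate: each is $s_0^*$ (more precisely, the image under the coface-type map $f\colon[k-1]\to[k-2]$ collapsing $0,1$) of the $(k-2)$-simplex $(x_1,\dots,x_{k-1})$, respectively $(y_1,\dots,y_{k-1})$, of $X^M_\bullet$. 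The key point is that the relation $\stk$, restricted to degenerate simplices, records exactly the relation on their non-degenerate "cores": a degenerate simplex $f_x^*(z)$ with $z$ non-degenerate has $z$ and $f_x$ determined up to the permutation clause. So I would show that two degenerate $k$-tuples in the same $\stk$-class have non-degenerate cores lying in the same $\Sp^{l-1}(M)$ via a permutation, and then argue that for tuples of the special shape $(a,a,a_2,\dots,a_{k-1})$, the underlying multiset $\{a,a_2,\dots,a_{k-1}\}$ (a point of $\Sp^{k-1}(M)$, by the Remark identifying $\Sp^{k-1}$ with the honest symmetric power when we use the generated relation) is a complete invariant. This forces $[(x_1,\dots,x_{k-1})] = [(y_1,\dots,y_{k-1})]$ in $\Sp^{k-1}(M)$, giving injectivity.

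Next I would identify the complement. A class $[(x_1,\dots,x_k)] \in \spkm$ lies in the image of $\varphi_k$ if and only if its $\stk$-class contains a representative of the form $(a,a,a_2,\dots,a_{k-1})$, i.e. a representative with a repeated point. By the second clause of Definition~\ref{defn:sim_sp} together with the permutation clause, the $\stk$-class of $(x_1,\dots,x_k)$ contains a tuple with a repeated coordinate precisely when the non-degenerate core $z \in M^l$ of $(x_1,\dots,x_k)$ has $l < k$ — that is, precisely when $(x_1,\dots,x_k)$ is itself degenerate as a simplex of $X^M_\bullet$, equivalently when the $x_i$ are not pairwise distinct. Conversely, if the $x_i$ are pairwise distinct, then $(x_1,\dots,x_k)$ is non-degenerate, its $\stk$-class is just its $\Sigma_k$-orbit (the surjection clause cannot introduce new elements, since any non-degenerate simplex $\stk$-related to a non-degenerate simplex of the same dimension must be a permutation of it), so the class is not in the image of $\varphi_k$, and it corresponds exactly to an unordered configuration of $k$ distinct points, i.e. a point of $F_k(M)$. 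Assembling: the points of $\spkm$ not coming from $\Sp^{k-1}(M)$ are exactly the $\Sigma_k$-orbits of pairwise-distinct $k$-tuples, which is the definition of $F_k(M)$.

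The main obstacle, and the step deserving the most care, is the structural claim underlying both parts: that every $\stk$-equivalence class has a \emph{well-defined} non-degenerate core $z \in M^l$ (unique up to $\Sigma_l$), and that $x \stk y$ holds if and only if $x$ and $y$ share this core. Definition~\ref{defn:sim_sp} only gives $\stk$ as the relation \emph{generated} by the two clauses, so a priori a chain of moves could mix permutations and (de)generacies in complicated ways; I need to show such chains can be normalized. Concretely I would argue that the simplicial identities in $X^M_\bulle$ let me push all degeneracy operators past permutations, so that any zig-zag $x = w_0 \sim w_1 \sim \dots \sim w_n = y$ can be rewritten to first pass to the common non-degenerate core (via the Eilenberg–Zilber lemma for the simplicial set $X^M_\bullet$, which guarantees a unique non-degenerate simplex mapping onto each simplex via a unique split epi) and then apply a single permutation. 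Once this normal form is in hand, both injectivity of $\varphi_k$ and the complement computation follow by the bookkeeping sketched above. I would also state explicitly (perhaps as a small lemma) the Eilenberg–Zilber fact being used, since it is the technical engine of the whole argument.
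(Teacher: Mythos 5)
Your plan rests on a structural claim that is false: that the Eilenberg--Zilber non-degenerate core of a tuple is a complete invariant of its $\stk$-class (up to permutation of the core). The permutation clause in Definition~\ref{defn:sim_sp} does not interact nicely with the simplicial degeneracy structure. Concretely, take $a \neq b$ in $M$ and $k=3$: the tuple $(a,a,b)$ is degenerate with EZ core $(a,b) \in M^2$, while its permutation $(a,b,a)$ is a \emph{non-degenerate} $2$-simplex of $X^M_\bullet$ (it is not in the image of $s_0$ or $s_1$), so its EZ core is itself, living in $M^3$. These two tuples are $\stk$-equivalent by the permutation clause, yet their cores do not even have the same dimension, so no amount of ``pushing degeneracies past permutations'' will produce a common core followed by a single permutation. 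Relatedly, your parenthetical that the paper's Remark identifies $\Sp^{k-1}(M)$ with the honest symmetric power is a misreading: the Remark says this only for $k-1 \leq 2$ and explicitly states that for higher $k$ the two spaces differ. The ``underlying multiset'' is therefore not the right invariant either.

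The invariant that does work, and which makes the paper's terse ``follows immediately from the definitions'' accurate, is the underlying \emph{set} $\{x_1,\dots,x_k\} \subseteq M$ of distinct values. Both generating rules preserve it, and conversely any two tuples with the same underlying set $\{a_1,\dots,a_r\}$ can be joined by a zigzag: permute each into a block-sorted form, then use the surjection clause through the common non-degenerate simplex $(a_1,\dots,a_r)$ (possibly needing one more permute-then-surject step to fix the multiplicities). This identifies $\spkm$ with the set of nonempty subsets of $M$ of cardinality at most $k$, under which $\varphi_k$ is the inclusion of subsets of size $\leq k-1$ (hence injective), and the complement is exactly the size-$k$ subsets, i.e.\ $F_k(M)$. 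Your treatment of the complement, which argues directly that a pairwise-distinct tuple has $\stk$-class equal to its $\Sigma_k$-orbit, is essentially fine and survives once rephrased without the false normal-form claim; it is the injectivity half that needs to be redone with the underlying-set invariant in place of the EZ core.
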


\begin{proof}
	This follows immediately from the definitions. 
\end{proof}

\begin{defn}
	Define $\spim$ as the colimit of $\Sp^0(M) \stackrel{\varphi_1}{\to} \Sp^1(M) \stackrel{\varphi_2}{\to} \cdots$. That is,
	\[
	\spim:= \text{colim} \left(\Sp^0(M) \stackrel{\varphi_1}{\to} \Sp^1(M) \to \cdots \to \Sp^{k-1}(M) \stackrel{\varphi_k}{\to} \spkm \to \cdots \right)
	\]
\end{defn}


\section{Relating polynomial functors to functors out of $\Bb(\spkm)$}   \label{sec:mcalp_mcalfb}


The goal of this section is to define the poset $\Bb(\spkm)$ and prove Proposition~\ref{prop:mcalpk_mcalfb}, which roughly says that the category of polynomial cofunctors $\om \to \mcalm$ of degree $\leq k$ is weakly equivalent, in the sense of Definition~\ref{defn:we}, to  a certain category of cofunctors $\mcalbb(\spkm) \to \mcalm$.


\subsection{The poset $\Bb(\spkm)$}  \label{subsection:Bbspkm}

We begin with the following.

\begin{defn}  \label{defn:gb_mcalbk}
	\begin{enumerate}
		\item[(i)]  A basis $\mcalb$ for the  topology of $M$ is called \emph{good} if each of its elements is diffeomorphic to an open ball. 
		\item[(ii)] For a good basis $\mcalb$, define $\mcalb_k(M) \subseteq \om$ as the full subposet whose objects are unions of at most $k$ pairwise disjoint elements of $\mcalb$. 
	\end{enumerate}
\end{defn}

Note that the empty set is an object of $\mcalb_k(M)$. Let $\mcalo(\spkm)$ be the poset of open subsets of $\spkm$ ordered by inclusion, and let  $\stk$ be the equivalence relation from Definition~\ref{defn:sim_sp}.

\begin{defn} \label{defn:mcalbb}
	Let $\mcalb$ be a good basis for the topology of $M$. Define $\Bb(\spkm) \subseteq \mcalo (\spkm)$ as the full subposet  whose objects are either of the form
	\begin{equation} \label{eqn:type1}
	\left( \coprod_{f: \xymatrix{\{1, \cdots, k\} \ar@{->>}[r] & \{1, \cdots, r\}}} B_{f(1)} \times \cdots B_{f(k)} \right) \slash \stk,
	\end{equation}
	or of the form 
	\begin{equation} \label{eqn:type2}
	\left( \coprod_{g: \xymatrix{\{1, \cdots, k\} \ar[r] & \{1, \cdots, r\}}} B_{g(1)} \times \cdots B_{g(k)} \right) \slash \stk,
	\end{equation}
	where $B_1, \cdots, B_r$ are pairwise disjoint elements of $\mcalb$. Here $f$ runs  over the set of surjective maps from $\{1, \cdots, k\}$ to $\{1, \cdots, r\}$, while $g$ runs over the set of all maps $\{1, \cdots, k\} \to \{1, \cdots, r\}$. The poset $\Bb(\spkm)$ is also required to contain the empty set as one of its objects. 
\end{defn} 

We say that (\ref{eqn:type1}) and (\ref{eqn:type2}) are \textit{generated} by $B_1, \cdots, B_r$. For example, in Figure~\ref{fig:mcalbb} below, $U$ has only one generator, namely $B_1$, while $V$ (respectively $W$) is of the form (\ref{eqn:type1}) (respectively (\ref{eqn:type2})) and is generated by $B_1$ and $B_2$. 

\begin{figure}[!ht]
	\centering
	\includegraphics[scale = 0.7]{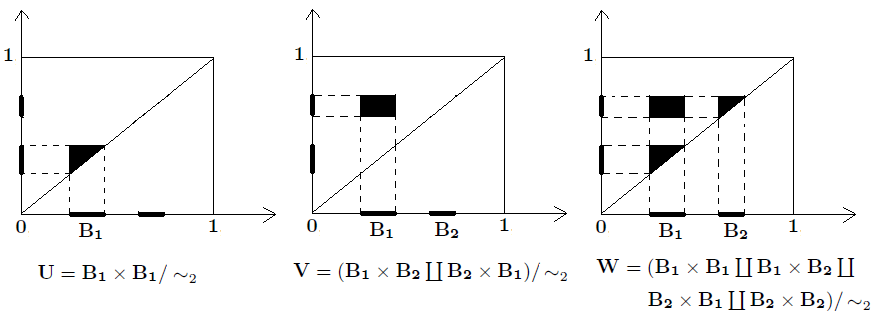}
	\caption{Example of objects of $\Bb(\Sp^2(M))$ with $M= [0, 1]$} \label{fig:mcalbb}
\end{figure}

\begin{defn}  \label{defn:lamb_theta}
	Define two functors $\lambda \colon \Bb(\spkm) \to \mcalb_k(M)$ and $\theta \colon \mcalb_k(M) \to \Bb(\spkm)$ as  
	\begin{enumerate}
		\item[(i)] $\lambda(\emptyset) = \emptyset$,  $\lambda(V) = B_1 \cup \cdots \cup B_r$ if $V$ has one of the forms (\ref{eqn:type1}) or (\ref{eqn:type2}), and 
		\item[(ii)] $\theta(U) = U^k \slash \stk$. 
	\end{enumerate} 
\end{defn} 

For example, in Figure~\ref{fig:mcalbb}, $\lambda(U) = B_1, \theta(B_1) = U$, $\lambda(V) = B_1 \cup B_2 = \lambda(W)$, and $\theta (B_1 \cup B_2) = W$. 

\begin{rmk} \label{rmk:lt=id}
	By the definitions, one has $\lambda \theta = id$. One also has an inclusion $\tau_V \colon V \hra \theta \lambda (V)$ for all $V$. In fact $V$ is one of the connected components of $\theta \lambda (V)$. 
\end{rmk} 

\begin{rmk} \label{rmk:objects_Bb}
	For $V \in \Bb(\spkm)$ generated by $B_1, \cdots, B_r$, define $\widehat{V}$ as the object of $\Bb(\spkm)$ of the form (\ref{eqn:type1}) which has the same generators as $V$. So  $V$ has the form (\ref{eqn:type1}) if and only if $\widehat{V} = V$.   The point of the remark is that every $V \in \Bb(\spkm)$ of the form (\ref{eqn:type2}) can be written as $V = \theta \lambda (\widehat{V})$. For example, in Figure~\ref{fig:mcalbb}, one has $\widehat{W} = V$ and $W= \theta \lambda (\widehat{W})$. The construction $\widehat{(-)}$ will be also used in Section~\ref{sec:linear_func} to show that certain categories are contractible, and in  Section~\ref{section:hl} to define the homogeneous layer of cofunctors out of $\mcalo(\spkm)$.  
\end{rmk}

The following definition is that of the concept of degree of an object of $\Bb(\spkm)$, which is used in different places in this paper. 

\begin{defn}  \label{defn:degree}
	 The \emph{degree} of $V \in \Bb(\spkm)$, denoted $\deg(V)$, is the number of generators of $V$.
\end{defn}	

For instance, in Figure~\ref{fig:mcalbb}, $\deg(U) = 1$ and $\deg(V) = 2 = \deg(W)$. Any object of the form (\ref{eqn:type1}) or (\ref{eqn:type2}) is of degree $r$.  Of course, the degree of the emptyset is $0$. 


\subsection{Relating polynomial functors to functors out of $\Bb(\spkm)$}

We now define an important category of cofunctors out of $\Bb(\spkm)$ and relate this to the catogory of polynomial cofunctors $\om \to \mcalm$ of degree $\leq k$. We first need to define a class of weak equivalences in $\Bb(\spkm)$.

\begin{defn} \label{defn:mcalw_Bb}
	Define $\mcalw_{\Bb(\spkm)} \subseteq \Bb(\spkm)$ as the subcategory with the same objects as $\Bb(\spkm)$. An inclusion $f \colon V \hra V'$ is declared  to be a morphism of $\mcalw_{\Bb(\spkm)}$ if it satisfies one of the following conditions:
	\begin{enumerate}
		\item[(a)] $\deg(V) = \deg(V')$ and $\pi_0(f) \colon \pi_0(V) \to \pi_0(V')$ is a bijection.
		\item[(b)] $f$ factors as $V \stackrel{g}{\to} W \stackrel{h}{\to} V'$ where $g$ and $h$ are both morphisms of $\Bb(\spkm)$ satisfying the following three conditions: $\pi_0(g)$ is a bijection, $\deg(V) = \deg(W)$, and $V' = \theta \lambda(W)$.  
	\end{enumerate}
A morphism of $\mcalw_{\Bb(\spkm)}$  is called \emph{weak equivalence}. 
\end{defn} 

For example, for every $V \in \Bb(\spkm)$, the inclusion $\tau_V \colon V \hra \theta \lambda(V)$ is a weak equivalence as it satisfies (b).  Note that morphisms of $\Bb(\spkm)$ satisfying  (a) are  honest weak equivalences as they are  homotopy equivalences in the traditional sense.  For our purposes, we also consider those satisfying (b) as weak equivalences.   

\begin{defn} \label{defn:mcalfb_spk}
	Let $\mcalbb(\spkm)$ be the poset from Definition~\ref{defn:mcalbb}. Define $\mcalfb(\mcalbb(\spkm); \mcalm)$ as the category of  cofunctors $F \colon \mcalbb(\spkm) \to \mcalm$ satisfying the following two conditions: 
	\begin{enumerate}
		\item[(a)] $F$ takes morphisms of  $\mcalw_{\Bb(\spkm)}$ to weak equivalences.
		\item[(b)] The image of every object under $F$ is fibrant.
	\end{enumerate}  
\end{defn}

\begin{rmk} \label{rmk:naturality}  For $F \in \mcalfb(\mcalbb(\spkm); \mcalm)$, and $V \in \mcalbb(\spkm)$, it is clear that the weak equivalence $F(\tau_V) \colon F(\theta \lambda (V)) \to F(V)$ is natural in $V$ since the morphisms of $\mcalbb(\spkm)$ are inclusions and since $F$ is a cofunctor. This is used implicitly in the proof of Lemma~\ref{lem:mcalbb} below. 
\end{rmk}

\begin{defn} \label{defn:mcalfba_spk}
	Let $\abb = \{A_0, \cdots, A_k\}$ be a sequence of objects of $\mcalm$. Define $\mcalfab (\mcalbb(\spkm); \mcalm)$ as the full subcategory of $\mcalfb(\mcalbb(\spkm); \mcalm)$ whose objects are cofunctors $F \colon \Bb(\spkm) \to \mcalm$ satisfying the following condition: for every $V \in \mcalbb(\spkm)$,  $F(V) \simeq A_i$ whenever $\deg(V) = i$. 
\end{defn}

We still need a couple of definitions before stating the first result of this section.

\begin{defn} \label{defn:isofun}
	Let $\mcals \subseteq \om$ be a subcategory of $\om$.  
	\begin{enumerate} 
		\item[(i)] A  cofunctor $F \colon \mcals \lra \mcalm$ is called \emph{isotopy cofunctor} if it satisfies the following two conditions:
		\begin{enumerate}
			\item[(a)] $F$ sends isotopy equivalences to weak equivalences. 
			\item[(b)]  For every $U \in \mcals$,  $F(U)$ is fibrant.  
		\end{enumerate}
		\item[(ii)] For $\mcals = \mcalb_k(M)$, the poset from Definition~\ref{defn:gb_mcalbk}, define $\mcalf(\mcalb_k(M); \mcalm)$ as the category of isotopy cofunctors from $\mcalb_k(M)$ to $\mcalm$. 
		\item[(iii)] For $A_0, \cdots, A_k \in \mcalm$, define $\mcalfa(\mcalb_k(M); \mcalm)$ as the category of isotopy cofunctors $F \colon \mcalb_k(M) \to \mcalm$ such that  $F(U) \simeq A_i$ whenever $U$ is the  union of $i$ disjoint elements of $\mcalb$. 
	\end{enumerate}  
\end{defn}

\begin{defn} \cite[Definition 6.3]{paul_don17-2} \label{defn:we}
	Let $\mcalc$ and $\mcald$ be categories both endowed with a class of maps called weak equivalences. 
	\begin{enumerate}
		\item[(i)] We say that two cofunctors  $F, G \colon \mcalc \lra \mcald$ are  \emph{weakly equivalent}, and we denote $F \simeq G$,  if they are connected by a zigzag of natural transformations which are objectwise weak equivalences. 
		\item[(ii)] A functor $F \colon \mcalc \lra \mcald$ is said to be a \emph{weak equivalence} if it satisfies the following two conditions.
		\begin{enumerate}
			\item[(a)] $F$ preserves weak equivalences.
			\item[(b)] There is another functor $G \colon \mcald \lra \mcalc$ such that $FG \simeq id$ and $GF \simeq id$.  The functor $G$ is also required to preserve weak equivalences. 
		\end{enumerate} 
		In that case we say that 	$\mcalc$ is \emph{weakly equivalent}  to $\mcald$.
	\end{enumerate}
\end{defn}

Now we want to show that the category $\mcalfb(\mcalbb(\spkm); \mcalm)$  is weakly equivalent to the category $\mcalf (\mcalb_k(M); \mcalm)$. To this end, define 
\[
\Psi \colon \mcalfb(\mcalbb(\spkm); \mcalm) \to \mcalf (\mcalb_k(M); \mcalm)
\]
 as $\Psi(F) = F \theta$. For every $A_0, \cdots, A_k \in \mcalm$, for every $F \in \mcalfab(\mcalbb(\spkm); \mcalm)$, one can easily check that $\Psi(F)$ lands in $\mcalfa (\mcalb_k(M); \mcalm)$.  This defines a functor
 \[
 \Psi_{\abb} \colon \mcalfab(\mcalbb(\spkm); \mcalm) \to \mcalfa (\mcalb_k(M); \mcalm)
 \]
 in the obvious way.

\begin{lem}  \label{lem:mcalbb}
	The functors $\Psi$ and $\Psi_{\abb}$ we just defined are both weak equivalences. 
\end{lem}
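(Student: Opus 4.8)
\textbf{Proof plan for Lemma~\ref{lem:mcalbb}.}

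The plan is to construct an explicit inverse functor $\Psi^{-1} \colon \mcalf(\mcalb_k(M); \mcalm) \to \mcalfb(\mcalbb(\spkm); \mcalm)$ and to verify that the two composites are weakly equivalent to the respective identities, using the combinatorial facts collected in Remarks~\ref{rmk:lt=id}, \ref{rmk:objects_Bb} and \ref{rmk:naturality}. The natural candidate is $\Psi^{-1}(G) := G\lambda$, i.e. precomposition with the projection functor $\lambda \colon \Bb(\spkm) \to \mcalb_k(M)$. First I would check this is well-defined: $G\lambda$ has fibrant values because $G$ does, and it sends weak equivalences of $\mcalw_{\Bb(\spkm)}$ to weak equivalences in $\mcalm$. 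For a morphism of type~(a) in Definition~\ref{defn:mcalw_Bb}, $\lambda$ takes it to an isotopy equivalence in $\mcalb_k(M)$ (same degree and $\pi_0$-bijection means the underlying unions $\lambda(V) \to \lambda(V')$ differ by an isotopy), so $G$ carries it to a weak equivalence; for a morphism of type~(b), factor through $W$ and note $\lambda(\tau_W) = \id$ since $\lambda\theta = \id$, while the other factor is again sent to an isotopy equivalence. Thus $G\lambda \in \mcalfb(\mcalbb(\spkm); \mcalm)$, and the same argument restricted to the $\abb$-decorated subcategories handles $\Psi_{\abb}$, since $\deg(V) = i$ forces $\lambda(V)$ to be a union of $i$ disjoint basis elements.

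Next I would establish the two composites. On one side, $\Psi \circ \Psi^{-1}(G) = G\lambda\theta = G$ on the nose, by Remark~\ref{rmk:lt=id} ($\lambda\theta = \id$); so that composite is literally the identity, in particular weakly equivalent to it. On the other side, $\Psi^{-1} \circ \Psi(F) = F\theta\lambda$, and here I would exhibit a natural transformation relating $F\theta\lambda$ to $F$. The inclusions $\tau_V \colon V \hra \theta\lambda(V)$ of Remark~\ref{rmk:lt=id} assemble, by Remark~\ref{rmk:naturality}, into a natural transformation $F(\tau_{-}) \colon F\theta\lambda \Rightarrow F$ (recall all morphisms of $\Bb(\spkm)$ are honest inclusions and $F$ is a cofunctor). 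Each component $F(\tau_V) \colon F(\theta\lambda(V)) \to F(V)$ is a weak equivalence: $\tau_V$ is a weak equivalence of $\mcalw_{\Bb(\spkm)}$ by condition~(b) of Definition~\ref{defn:mcalw_Bb} (take $W = V$, $g = \id$, noting $\theta\lambda(V) = \theta\lambda(\widehat V)$ and $\deg V = \deg V$), and $F$ preserves such maps by hypothesis. So $\Psi^{-1}\Psi(F) \simeq F$ objectwise-naturally, which is exactly what Definition~\ref{defn:we}(ii) requires. Both $\Psi$ and $\Psi^{-1}$ preserve weak equivalences — for $\Psi$ this is immediate since $\theta$ sends isotopy equivalences to weak equivalences of $\mcalw_{\Bb(\spkm)}$, and for $\Psi^{-1}$ it follows from naturality of weak equivalences under precomposition.

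The main obstacle I anticipate is the well-definedness check for $\Psi$ (and $\Psi^{-1}$) on morphisms — specifically, verifying that $\theta$ carries isotopy equivalences in $\mcalb_k(M)$ into $\mcalw_{\Bb(\spkm)}$, and conversely that $\lambda$ sends type-(a) and type-(b) weak equivalences to isotopy equivalences of $\mcalb_k(M)$. The subtle point is the interaction between the quotient by $\stk$ and the component structure: one must confirm that an isotopy $U \hra U'$ with $\pi_0$ a bijection induces $\theta(U) = U^k/\stk \hra U'^k/\stk$ with the same degree and a $\pi_0$-bijection, which requires tracking how $\stk$ glues together the diagonal strata. Once that bookkeeping is in place, the verification of the two composites is essentially formal via Remarks~\ref{rmk:lt=id}--\ref{rmk:naturality}. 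The restriction to $\Psi_{\abb}$ then adds nothing new beyond observing that $\lambda$ and $\theta$ both preserve degree in the appropriate sense, so the decorations $A_i$ match up automatically.
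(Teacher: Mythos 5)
Your proposal is essentially identical to the paper's proof: the paper also constructs the inverse by precomposition with $\lambda$ (calling it $\Phi$), uses $\lambda\theta = \id$ from Remark~\ref{rmk:lt=id} to get $\Psi\Phi = \id$ on the nose, and uses Remark~\ref{rmk:naturality} and the inclusions $\tau_V$ to get the objectwise weak equivalence relating $\Phi\Psi$ to the identity, finishing by restricting to the $\abb$-decorated subcategories. (Minor note: the paper writes the natural transformation as $\id \to \Phi\Psi$, whereas, as you correctly observe, contravariance of $F$ makes $F(\tau_V)$ go from $\Phi\Psi(F)$ to $F$; since Definition~\ref{defn:we} only asks for a zigzag, this discrepancy is harmless.)
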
 

\begin{proof}
	To see that $\Psi$ is a weak equivalence,  define  
	\[
	\Phi \colon \mcalf (\mcalb_k(M); \mcalm) \to \mcalfb(\mcalbb(\spkm); \mcalm)
	\]
	as $\Phi (G) = G \lambda$. 
	One has $\Psi \Phi = id$ since $\lambda \theta = id$ by Remark~\ref{rmk:lt=id}. Using Remark~\ref{rmk:naturality}, one can easily check that there is a map $id \to \Phi \Psi$ which is a weak equivalence. By the definitions, the functors $\Psi$ and $\Phi$ preserve weak equivalences. For the second part, the restriction functor $\Phi_{\abb}:= \Phi|\mcalfa(\mcalb_k(M); \mcalm)$ does the desired work. 
\end{proof}

We now want to relate the category $\mcalf(\mcalb_k(M); \mcalm)$ to the category of good polynomial cofunctors, which is defined as follows.

\begin{defn}   \label{defn:goodfun}
	A  cofunctor $F \colon \om \lra \mcalm$ is called \emph{good} if it satisfies the following two conditions:
	\begin{enumerate}
		\item[(a)] $F$ is an isotopy cofunctor (see Definition~\ref{defn:isofun});
		\item[(b)] For any string $U_0 \ra U_1 \ra \cdots$ of inclusions of $\om$, the natural map 
		\[
		F\left(\bigcup_{i=0}^{\infty} U_i\right) \lra \underset{i}{\text{holim}} \; F(U_i)
		\]
		is a weak equivalence. 
	\end{enumerate} 
\end{defn}

\begin{defn} \label{defn:PF}
	A  cofunctor $F \colon \om \lra \mcalm$ is called \emph{polynomial of degree $\leq k$} if for every $U \in \om$ and pairwise disjoint closed subsets $C_0, \cdots, C_k$ of $U$, the canonical map 
	\[
	F(U) \lra \underset{S \neq \emptyset}{\text{holim}} \; F(U \backslash \cup_{i \in S} C_i)
	\] 
	is a weak equivalence. Here $S \neq \emptyset$ runs over the power set of $\{0, \cdots, k\}$. 
\end{defn}

\begin{defn} \label{defn:mcalpa}
	\begin{enumerate} 
		\item[$\bullet$] Define $\mcalp_k(\om; \mcalm)$ as the category of  good cofunctors $F \colon \om \to \mcalm$ that are polynomial of degree $\leq k$ (see Definitions~\ref{defn:goodfun} and \ref{defn:PF}). 
		\item[$\bullet$] For $A_0, \cdots, A_k \in \mcalm$, define $\mcalpa(\om; \mcalm)$ as the subcategory of $\mcalp_k(\om; \mcalm)$ whose objects are cofunctors $F \colon \om \to \mcalm$ such that $F(U) \simeq A_i$ for every $U$ diffeomorphic to the disjoint union of exactly $i$ open balls (not necessarily in $\mcalb$). 
	\end{enumerate} 
\end{defn}

\begin{lem}  \label{lem:PF} 
	\begin{enumerate}
		\item[(i)] The restriction functor 
		\[
		res \colon \mcalp_k(\om; \mcalm) \to \mcalf(\mcalb_k(M); \mcalm), \quad F \mapsto F|\mcalb_k(M),
		\]
	   is a weak equivalence.  
		\item[(ii)] For every $A_0, \cdots, A_k \in \mcalm$, the same statement holds for $\mcalpa(\om; \mcalm)$ and $\mcalfa(\mcalb_k(M); \mcalm)$. 
	\end{enumerate} 
\end{lem}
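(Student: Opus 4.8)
The plan is to exhibit an inverse to $res$, up to the weak equivalences of Definition~\ref{defn:we}, given by homotopy right Kan extension along the inclusion $\mcalb_k(M)\hra\om$. Explicitly, for an isotopy cofunctor $G\colon\mcalb_k(M)\to\mcalm$ I would set
\[
res^{!}(G)(U) := \underset{V\in\mcalb_k(M),\,V\subseteq U}{\text{holim}}\, G(V),
\]
which is well behaved because $G$ is objectwise fibrant: it is a cofunctor on $\om$, objectwise fibrant, and it sends objectwise weak equivalences of diagrams to weak equivalences. The first thing to check is that $res^{!}(G)$ is good and polynomial of degree $\leq k$, i.e.\ that $res^{!}$ actually lands in $\mcalp_k(\om;\mcalm)$; this is precisely \cite[Theorem~1.1]{paul_don17-2} (which generalizes \cite[Theorem~6.12]{pryor15} and \cite[Theorem~6.1]{wei99}), and I would recall its mechanism: the polynomial estimate comes from cofinally replacing, inside a punctured cube $\{U\setminus\bigcup_{i\in S}C_i\}_{\emptyset\neq S\subseteq\{0,\dots,k\}}$, each open set by basis elements of $\mcalb_k(M)$ contained in it, while goodness comes from the compatibility of $\text{holim}$ with filtered unions.

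Next I would establish the two homotopies of Definition~\ref{defn:we}(ii)(b). For $res\circ res^{!}\simeq\id$: if $V\in\mcalb_k(M)$ then the indexing poset $\{W\in\mcalb_k(M):W\subseteq V\}$ has $V$ as maximum, so the counit $res^{!}(G)(V)\to G(V)$ is a weak equivalence, and it is natural in $V$ since every morphism in sight is an inclusion. For $res^{!}\circ res\simeq\id$: if $F\in\mcalp_k(\om;\mcalm)$ then for every $U\in\om$ the unit $F(U)\to\text{holim}_{V\subseteq U}F(V)$ is a weak equivalence — again exactly \cite[Theorem~1.1]{paul_don17-2}, and this is the step where polynomiality of degree $\leq k$ and goodness of $F$ are used. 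Since $res$ obviously preserves weak equivalences and $res^{!}$ does so because $\text{holim}$ preserves objectwise weak equivalences between fibrant diagrams, this proves (i).

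For (ii) it remains to check that $res$ and $res^{!}$ respect the value constraints, after which the zigzags from (i) restrict to the relevant subcategories. In one direction, a union $V$ of $i\leq k$ pairwise disjoint elements of $\mcalb$ is in particular diffeomorphic to a disjoint union of $i$ open balls, so $F\in\mcalpa(\om;\mcalm)$ forces $F(V)\simeq A_i$; hence $res$ carries $\mcalpa(\om;\mcalm)$ into $\mcalfa(\mcalb_k(M);\mcalm)$. In the other direction, given $G\in\mcalfa(\mcalb_k(M);\mcalm)$ and $U\in\om$ diffeomorphic to a disjoint union of $i\leq k$ balls, use that $\mcalb$ is a basis to choose pairwise disjoint $B_1,\dots,B_i\in\mcalb$ with $B_1\cup\cdots\cup B_i\hra U$ an isotopy equivalence; since $res^{!}(G)$ is an isotopy cofunctor and $B_1\cup\cdots\cup B_i\in\mcalb_k(M)$, one gets $res^{!}(G)(U)\simeq res^{!}(G)(B_1\cup\cdots\cup B_i)\simeq G(B_1\cup\cdots\cup B_i)\simeq A_i$, so $res^{!}$ carries $\mcalfa(\mcalb_k(M);\mcalm)$ into $\mcalpa(\om;\mcalm)$.

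The main obstacle is the first step together with its companion in the second paragraph: that the Kan extension of an isotopy cofunctor on $\mcalb_k(M)$ is polynomial of degree $\leq k$ and good, and dually that a good polynomial cofunctor is recovered from its restriction. These are exactly the technical core isolated in \cite[Theorem~1.1]{paul_don17-2}; once that result is invoked, everything else is formal bookkeeping with comma categories and isotopy invariance.
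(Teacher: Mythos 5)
Your proof follows the same approach as the paper: exhibit the homotopy right Kan extension $(-)^!$ along $\mcalb_k(M)\hra\om$ as a weak inverse to $res$, verify that it lands in $\mcalp_k(\om;\mcalm)$, check the two unit/counit zigzags and preservation of weak equivalences, and for part~(ii) observe that both functors respect the value constraints. The only discrepancy is a citation detail: the paper derives goodness of $G^!$ from \cite[Theorem~4.2]{paul_don17-2} and polynomiality from \cite[Lemma~5.3]{paul_don17-2}, reserving \cite[Theorem~1.1]{paul_don17-2} solely for the statement that $id\to(-)^!\circ res$ is a weak equivalence, whereas you fold the first two into Theorem~1.1 as well; this is a bookkeeping inaccuracy, not a mathematical gap.
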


\begin{proof}
	For the first part, 
	define $(-)^{!} \colon \mcalf(\mcalb_k(M); \mcalm) \to \mcalp_k(\om; \mcalm)$ as the homotopy right Kan extension, that is, $G^!(U):= \underset{V \in \mcalb_k(U)}{\text{holim}} \, F(V)$. For any  $G \in \mcalf(\mcalb_k(M); \mcalm)$, the cofunctor $G^!$ is good (by \cite[Theorem 4.2]{paul_don17-2}) and polynomial of degree $\leq k$ (by \cite[Lemma 5.3]{paul_don17-2}). So $(-)^!$ lands in the right category.  By definition, the canonical map $id \to res (-)^!$ is a weak equivalence. By \cite[Theorem 1.1]{paul_don17-2}, the canonical map $id \to (-)^!  res$ is also a weak equivalence. Clearly, the functors $res$ and $(-)^!$ preserve weak equivalences.  This proves the first part. Likewise, one can prove the second part by considering the restriction functors $res |\mcalpa(\om; \mcalm)$ and $(-)^!|\mcalfa(\mcalb_k(M); \mcalm$. 
\end{proof} 

Combining  Lemmas~\ref{lem:mcalbb} and \ref{lem:PF}, we have the following.  

\begin{prop}  \label{prop:mcalpk_mcalfb}
	Consider the functors $\Phi, \Phi_{\abb}$, and $res$ from Lemmas~\ref{lem:mcalbb} and \ref{lem:PF}.  Then the composites 
	\[
	\Phi res \colon \mcalp_k(\om; \mcalm) \to \mcalfb(\Bb(\spkm); \mcalm)    \text{ and } \Phi_{\abb} res \colon \mcalp_{k\abb}(\om; \mcalm) \to \mcalfb_{\abb}(\Bb(\spkm); \mcalm)
	\]
	 are both weak equivalences. 
\end{prop}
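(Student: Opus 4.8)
The proof of Proposition~\ref{prop:mcalpk_mcalfb} is essentially a formal bookkeeping exercise once Lemmas~\ref{lem:mcalbb} and \ref{lem:PF} are in hand, so the plan is to verify that the class of weak equivalences of categories (Definition~\ref{defn:we}) is closed under composition, and then feed in the two lemmas. First I would recall that Lemma~\ref{lem:PF}(i) gives that $res \colon \mcalp_k(\om; \mcalm) \to \mcalf(\mcalb_k(M); \mcalm)$ is a weak equivalence, with homotopy inverse $(-)^!$, and Lemma~\ref{lem:mcalbb} gives that $\Psi \colon \mcalfb(\Bb(\spkm); \mcalm) \to \mcalf(\mcalb_k(M); \mcalm)$ is a weak equivalence, with homotopy inverse $\Phi = (-)\circ\lambda$ (where in fact $\Psi\Phi = id$ on the nose, and $\Phi\Psi \simeq id$ via a zigzag built from Remark~\ref{rmk:naturality}). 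Since a weak equivalence of categories in the sense of Definition~\ref{defn:we} has a two-sided homotopy inverse that also preserves weak equivalences, $\Phi$ is itself a weak equivalence of categories.

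The next step is to observe that the composite of two weak equivalences of categories is again one. This is the small lemma I would either state explicitly or simply invoke: if $F_1\colon \mcalc \to \mcald$ and $F_2\colon \mcald \to \mcale$ both preserve weak equivalences and have homotopy inverses $G_1, G_2$ preserving weak equivalences with $F_iG_i \simeq id$, $G_iF_i \simeq id$, then $F_2F_1$ preserves weak equivalences and $G_1G_2$ is a homotopy inverse: indeed $F_2F_1G_1G_2 \simeq F_2 \, id \, G_2 = F_2G_2 \simeq id$, using that $F_2$ (resp.\ $G_1$) carries the zigzag $F_1G_1 \simeq id$ (resp.\ $G_2F_2 \simeq id$) to a zigzag of objectwise weak equivalences because it preserves weak equivalences; symmetrically for $G_1G_2F_2F_1 \simeq id$. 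Applying this with $F_1 = res$, $F_2 = \Phi$ (so $\mcalc = \mcalp_k(\om;\mcalm)$, $\mcald = \mcalf(\mcalb_k(M);\mcalm)$, $\mcale = \mcalfb(\Bb(\spkm);\mcalm)$) shows that $\Phi \circ res$ is a weak equivalence of categories, with homotopy inverse $(-)^! \circ \Psi$.

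For the second statement I would run the identical argument with the decorated versions: Lemma~\ref{lem:PF}(ii) gives that $res$ restricts to a weak equivalence $\mcalp_{k\abb}(\om;\mcalm) \to \mcalfa(\mcalb_k(M);\mcalm)$ with homotopy inverse $(-)^!|\mcalfa(\mcalb_k(M);\mcalm)$, and Lemma~\ref{lem:mcalbb} gives that $\Psi_{\abb}$ is a weak equivalence $\mcalfab(\Bb(\spkm);\mcalm) \to \mcalfa(\mcalb_k(M);\mcalm)$ with homotopy inverse $\Phi_{\abb}$. One should check here that the zigzags witnessing $\Phi\Psi \simeq id$ and $(-)^!res \simeq id$ actually restrict to the subcategories $\mcalfab(\Bb(\spkm);\mcalm)$ and $\mcalp_{k\abb}(\om;\mcalm)$ respectively — this holds because those zigzags are objectwise weak equivalences, hence preserve the conditions "$F(V) \simeq A_i$ when $\deg(V)=i$" and "$F(U) \simeq A_i$ when $U$ is $i$ disjoint balls" up to weak equivalence, which is exactly what membership in the decorated subcategories requires. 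Composing, $\Phi_{\abb}\circ res$ is a weak equivalence with homotopy inverse $(-)^!\circ\Psi_{\abb}$.

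The only genuine subtlety — the "main obstacle," though it is mild — is making sure the notion of weak equivalence of categories in Definition~\ref{defn:we} really is closed under composition, since that definition only asks for a homotopy inverse up to a \emph{zigzag} of objectwise weak equivalences rather than an honest natural weak equivalence. The verification above shows this goes through provided both functors in the composite preserve weak equivalences, which is part of the definition; the functors $res$, $(-)^!$, $\Psi$, $\Phi$ (and their decorated analogues) all do, as recorded in the proofs of Lemmas~\ref{lem:mcalbb} and \ref{lem:PF}. So the proof of Proposition~\ref{prop:mcalpk_mcalfb} amounts to: cite the two lemmas, invoke closure of weak equivalences of categories under composition, and note that the decorated zigzags restrict appropriately.
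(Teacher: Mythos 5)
Your proof is correct and takes essentially the same route as the paper, which (deliberately) gives no written proof at all: it simply says ``Combining Lemmas~\ref{lem:mcalbb} and \ref{lem:PF}, we have the following.'' What you add is the one observation the paper leaves implicit, namely that the class of weak equivalences of Definition~\ref{defn:we} is closed under composition even though inverses are only required up to a \emph{zigzag} of objectwise weak equivalences. Your verification of this is right: the chain $F_2F_1G_1G_2 \simeq F_2G_2 \simeq id$ works because applying $F_2$ on the left (and $G_2$ on the right) to the zigzag witnessing $F_1G_1 \simeq id$ still produces a zigzag of objectwise weak equivalences, precisely because $F_2$ is required by the definition to preserve weak equivalences; the symmetric argument handles $G_1G_2F_2F_1 \simeq id$. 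Likewise your remark that $\Phi$ is itself a weak equivalence is correct and immediate from the symmetry of Definition~\ref{defn:we}. The only mild inefficiency is in the $\abb$-decorated case: you re-derive that the zigzags restrict to the subcategories, but Lemmas~\ref{lem:mcalbb}(second part) and~\ref{lem:PF}(ii) already assert outright that $\Psi_{\abb}$ (with inverse $\Phi_{\abb}$) and the restricted $res$ (with inverse the restricted $(-)^!$) are weak equivalences, so you could simply cite them and compose, exactly as in the undecorated case. This is a harmless redundancy, not a gap.
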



\section{The category of linear functors from $\mcalo(\spkm)$ to $\mcalm$}  \label{sec:linear_func}

In this section  we introduce a version of linear cofunctors out of   $\mcalo(\spkm)$.   Such cofunctors are simply defined to be the homotopy right Kan extension of cofunctors  $\Bb(\spkm) \to \mcalm$, where $\Bb(\spkm) \subseteq \mcalo(\spkm)$ is the full subposet from Definition~\ref{defn:spkm} which is constructed out of $\mcalb$. The main result of this section, Theorem~\ref{thm:lb_lbp} below, says that this definition is independent of the choice of  $\mcalb$. Essentially we use the same approach as  Pryor-Weiss \cite{pryor15, wei99} to prove this.

\subsection{Definition of  $\mcall^{\mcalb}(\mcalo(\spkm); \mcalm)$}

We begin with a few definitions.

\begin{defn}
	\begin{enumerate}
		\item[(i)] 	For an open subset $U$ of $\spkm$, define $\mcalbb(U) \subseteq \Bbspkm$ as the full subposet $\mcalbb(U) = \{V \in \Bbspkm| \ V \subseteq U\}.$  
		\item[(ii)]  Given a cofunctor $F \colon \mcalbb(\spkm) \to \mcalm$, we let $F^{!} \colon \mcalo (\spkm) \to \mcalm$ denote its homotopy right Kan extension  along the inclusion $\mcalbb(\spkm) \hra \mcalo(\spkm)$. Explicitly, 
		\begin{equation} \label{eqn:fsrik}
		F^{!}(U):= \underset{V \in \mcalbb(U)}{\text{holim}} \, F(V). 
		\end{equation}
	\end{enumerate}
\end{defn}

\begin{defn} \label{defn:blinear}
	A cofunctor $F \colon \mcalo (\spkm) \to \mcalm$ is called \emph{$\mcalb$-linear} if the canonical map $F \to (F|\mcalbb(\spkm))^{!}$ is a weak equivalence. 
\end{defn}

Recall the categories $\mcalfb (\mcalbb(\spkm); \mcalm)$ and $\mcalfab (\mcalbb(\spkm); \mcalm)$ from Definitions~\ref{defn:mcalfb_spk} and \ref{defn:mcalfba_spk}. 

\begin{defn} \label{defn:bgood} Let $F \colon \mcalo(\spkm) \to \mcalm$ be a  cofunctor.
	\begin{enumerate}
		\item[(i)] The cofunctor $F$ is called \emph{$\mcalb$-good} if it satisfies the following two conditions:
		\begin{enumerate}
			\item[(a)] For every $U \in \mcalo(\spkm)$, $F(U)$ is fibrant.
			\item[(b)] The restriction $F|\mcalbb(\spkm)$ belongs to $\mcalfb (\mcalbb(\spkm); \mcalm)$.
		\end{enumerate} 
		\item[(ii)] For $\abb= \{A_0, \cdots, A_k\} \in \mcalm^{k+1}$, $F$ is said to be \emph{$\mcalb_{\abb}$-good} if it satisfies (a) and $F|\mcalbb(\spkm) \in \mcalfab (\mcalbb(\spkm); \mcalm$. 
	\end{enumerate}	 
\end{defn} 

Now we can define the category $\mcall^{\mcalb}(\mcalo(\spkm); \mcalm)$. 

\begin{defn} \label{defn:mcallb}  Let $\mcalb$ be a good basis for the topology of $M$ (see Definition~\ref{defn:gb_mcalbk}), and let  $\mcalbb(\spkm)$ be the poset from Definition~\ref{defn:spkm}. 
	\begin{enumerate} 
		\item[(i)] Define $\mcall^{\mcalb} (\mcalo(\spkm); \mcalm)$ as the category of  cofunctors $F \colon \mcalo(\spkm) \to \mcalm$ that are $\mcalb$-good and $\mcalb$-linear (see Definitions~\ref{defn:bgood} and \ref{defn:blinear}).
		\item[(ii)] For $\abb= \{A_0, \cdots, A_k\} \in \mcalm^{k+1}$, define $\mcall^{\mcalb}_{\abb} (\mcalo(\spkm); \mcalm)$ as the category of  cofunctors $F \colon \mcalo(\spkm) \to \mcalm$ that are $\mcalb_{\abb}$-good and $\mcalb$-linear.
	\end{enumerate} 
\end{defn}

The next section addresses the question to know whether the category $\mcall^{\mcalb}(\mcalo(\spkm); \mcalm)$ depends on $\mcalb$.

\subsection{Dependence of $\mcall^{\mcalb} (\mcalo(\spkm); \mcalm)$ on the choice of $\mcalb$}  \label{subsec:dependence}

The goal of this section is to prove Theorem~\ref{thm:lb_lbp}, which says that the category $\mcall^{\mcalb} (\mcalo(\spkm); \mcalm)$ does not depend on $\mcalb$. We need a number of intermediate results. Most of them involve the geometric realization functor which we denote $|-| \colon \text{Cat} \to \text{Top}$ as usual. 

\begin{lem}  \label{lem:coverX}
	Let $X$ be a topological space, and let $\mcalu = \{U_i\}_{i \in I}$ be a cover of $X$. ($\mcalu$ need not be an open cover.) Assume that $\mcalu$ has the following properties.
	\begin{enumerate}
		\item[(a)]  Each $U_i$ is contractible.
		\item[(b)]  For every $i, j \in I$ such that $U_i \cap U_j \neq \emptyset$, for every $x \in U_i \cap U_j$, there exists $k \in I$ such that $x \in U_k \subseteq U_i \cap U_j$.
	\end{enumerate} 
	Then the geometric realization of $\mcalu$, viewed as a poset ordered by inclusion, is homotopy equivalent to $X$. That is, $|\mcalu| \simeq X$. 
\end{lem}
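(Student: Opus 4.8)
The plan is to recognize $|\mcalu|$ as the homotopy colimit of a diagram that is pointwise contractible, and to compare this homotopy colimit with $X$ via a standard cover/nerve argument. More precisely, I would first observe that since $\mcalu$ is a poset of contractible spaces, there is a natural map $\hocolim_{U_i \in \mcalu} U_i \to X$ induced by the inclusions, and—because each $U_i$ is contractible—the projection $\hocolim_{U_i \in \mcalu} U_i \to \hocolim_{U_i \in \mcalu} * = |\mcalu|$ is a weak (in fact homotopy) equivalence. So it suffices to show that the first map is a homotopy equivalence, i.e.\ that the homotopy colimit of the diagram $U_i \mapsto U_i$ over the poset $\mcalu$ computes $X$.

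The key point where hypothesis (b) enters is that it guarantees $\mcalu$ is a \emph{basis-like} cover: condition (b) says exactly that the subposet of $\mcalu$ consisting of elements containing a fixed point $x$ is nonempty and downward-directed in a suitable sense, so that for every $x \in X$ the subcategory $\mcalu_x = \{U_i \mid x \in U_i\}$ has contractible nerve (it is filtered from below by (b), hence $|\mcalu_x|$ is contractible). This is precisely the hypothesis needed to run the generalized nerve theorem (or equivalently, to apply the projection lemma / Quillen's Theorem A fiberwise, or a Mayer--Vietoris/codescent spectral sequence argument): the canonical map $\hocolim_{U_i \in \mcalu} U_i \to X$ is a weak equivalence whenever all the $U_i$ are (homotopy equivalent to) open sets, the cover is closed under the ``intersection refinement'' property (b), and each point-fiber poset is contractible. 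Concretely I would cite the nerve theorem in the form: if $\mcalu$ is a cover of $X$ such that every nonempty finite intersection of members is a disjoint union of contractible sets (guaranteed here by combining (a) and (b): each $U_i \cap U_j$ is covered by members of $\mcalu$ lying inside it, so its nerve is equivalent to that of a cover with the same property, and an induction on the number of sets shows all iterated intersections have contractible components), then $|\mcalu| \simeq X$.

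A cleaner route, which I would probably adopt to keep the argument self-contained, is: let $N(\mcalu)$ be the nerve, and build the map $|\mcalu| \to X$ directly using a partition-of-unity-type argument if $\mcalu$ were open, or more robustly, factor through $\hocolim$. Define $E := \hocolim_{\mcalu} U_i$. The two projections give $|\mcalu| \xleftarrow{\sim} E \xrightarrow{p} X$. To see $p$ is a weak equivalence, use the standard fact (Dugger--Isaksen, or Segal) that for a cover satisfying property (b), the augmented simplicial space $[\,n\,] \mapsto \coprod_{i_0 \le \cdots \le i_n} U_{i_0} \to X$ is a homotopy colimit diagram; property (b) is what makes the relevant "completeness" or "localness" condition hold, by letting one refine any intersection by smaller members. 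Running the spectral sequence of this simplicial space, or invoking Theorem A with the fibers $\mcalu_x$ shown contractible via (b), yields that $p$ is a weak equivalence.

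\textbf{Main obstacle.} The delicate step is verifying that the iterated intersections behave well: property (b) only directly controls pairwise intersections $U_i \cap U_j$, so I must bootstrap from pairs to arbitrary finite intersections, checking that each such intersection is a (disjoint) union of members of $\mcalu$ contained in it—and in particular has the homotopy type of a disjoint union of contractible pieces. This is the combinatorial heart of the nerve-theorem hypothesis, and I expect the bulk of the careful work (an induction on the number of sets, using (b) repeatedly) to live there; everything after that is an invocation of the nerve theorem / homotopy colimit machinery.
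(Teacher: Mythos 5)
Your preferred (cleaner) route is essentially the paper's: the paper disposes of the lemma in one line by citing the proof of Lemma 3.5 of \cite{wei99}, and Weiss's argument there is exactly the homotopy-colimit comparison $|\mcalu| \xleftarrow{\ \sim\ } \textup{hocolim}_{\mcalu}\, U_i \xrightarrow{\ p\ } X$, with the weak equivalence of $p$ established via contractibility of the fibers over points of $X$. Two remarks. First, the obstacle you single out---bootstrapping condition (b) from pairwise to arbitrary finite intersections---is not really an obstacle: (b) iterates trivially (having found $U_k \ni x$ inside $U_{i_0} \cap \cdots \cap U_{i_{n-1}}$, apply (b) once more to $U_k$ and $U_{i_n}$), and in any case your Route 2 never needs finite intersections at all; it only needs that the subposet $\mcalu_x = \{U \in \mcalu : x \in U\}$ is cofiltered, which falls out of (b) applied once to any pair in $\mcalu_x$. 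Second, your Route 1 as phrased has a genuine gap you do not flag: the $|\mcalu|$ in the statement is the order complex of the inclusion poset, whereas the classical nerve theorem is a statement about the \v{C}ech nerve of a cover (vertices the $U_i$, simplices the finite subfamilies with nonempty intersection). These are different complexes, and relating them for a basis-like cover (via a crosscut or cofinality argument) is itself nontrivial. The homotopy-colimit route sidesteps this cleanly, since $\textup{hocolim}_{\mcalu}\, *$ is the order complex on the nose, so it is better to stay with Route 2.
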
 

\begin{proof}
	This works in the same way as the proof of Lemma 3.5 from \cite{wei99}. 
\end{proof} 

To state the next results, we need to make the following definition.

\begin{defn} \label{defn:Bbp}
	Let $p \geq 0$ be an integer, and let $U$ be an open subset of $\spkm$. 
	\begin{enumerate}
		\item[(i)] Define $\Bb_p(U)$ as the poset whose objects are strings of $p$ composable morphisms in $\Bb(U)$, $V_0 \to \cdots \to V_p$, and whose morphisms are natural transformations of such diagrams whose component maps lie in $\mcalw_{\Bb(\spkm)}$ (see Definition~\ref{defn:mcalw_Bb}).   
		\item[(ii)] Define $\Bt_p(U) \subseteq  \Bb_p(U)$ as the full subposet whose objects are strings  $V_0 \to \cdots \to V_p$ such that every $V_i$ is of the form (\ref{eqn:type1}). 
	\end{enumerate} 
\end{defn} 

\begin{prop} \label{prop:Bbzu}
	For every open subset $U \subseteq \spkm$, one has $|\Bt_0(U)| \simeq U$. 
\end{prop}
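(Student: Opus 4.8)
The plan is to apply Lemma~\ref{lem:coverX} to the space $X = U$, using as cover the collection $\mcalu = \{V \mid V \in \Bt_0(U)\}$ of those elements $V \in \mcalbb(\spkm)$ of the form (\ref{eqn:type1}) with $V \subseteq U$, viewed as subsets of $U$. Since $\Bt_0(U)$ is a poset ordered by inclusion (the $0$-strings are just objects of $\Bb(U)$ of type (\ref{eqn:type1})), and since $|\mcalu| = |\Bt_0(U)|$ once we know $\mcalu$ satisfies the hypotheses of Lemma~\ref{lem:coverX}, it suffices to verify: (a) each such $V$ is contractible, and (b) the intersection condition, and that $\mcalu$ actually covers $U$.

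First I would check that $\mcalu$ covers $U$. Given a point $[x] \in U$ with $x = (x_1, \dots, x_k) \in M^k$, the underlying set $\{x_1, \dots, x_k\}$ consists of, say, $r$ distinct points; since $\mcalb$ is a good basis one can choose pairwise disjoint $B_1, \dots, B_r \in \mcalb$ with $x_j \in B_{i(j)}$ for the appropriate index function, and by shrinking the $B_i$ (using that $\mcalb$ is a basis) one arranges that the resulting type-(\ref{eqn:type1}) object $V$ generated by $B_1, \dots, B_r$ is contained in $U$ and contains $[x]$. Next, contractibility of $V$: an object of type (\ref{eqn:type1}) generated by pairwise disjoint balls $B_1, \dots, B_r$ is, after quotienting by $\stk$, homeomorphic to the unordered configuration-type space of $r$ labelled-by-multiplicity points, one in each $B_i$; since each $B_i$ is diffeomorphic to a ball hence contractible, and the combinatorial/multiplicity data is rigid, $V$ deformation retracts onto the point $[(\text{centers, with multiplicities})]$. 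This is where I would need to be slightly careful: I should argue that a simultaneous straight-line contraction of each $B_i$ to its center descends through the quotient by $\stk$, which it does because $\stk$ only identifies points according to permutations and degeneracy relations that are preserved by applying the same contraction coordinatewise.

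For condition (b), suppose $V$ (generated by $B_1, \dots, B_r$) and $V'$ (generated by $B'_1, \dots, B'_s$) are both in $\mcalu$ and $[x] \in V \cap V'$. The key point is that $V \cap V'$ is again (a union of components each of which is) of type (\ref{eqn:type1}): intersecting, the relevant generators are the nonempty sets $B_i \cap B'_j$, and because $\mcalb$ is a basis one can find, for the given point $[x]$, pairwise disjoint elements $B''_1, \dots, B''_t \in \mcalb$ with $B''_l \subseteq B_{i(l)} \cap B'_{j(l)}$ realizing the same multiplicity pattern as $[x]$; the type-(\ref{eqn:type1}) object $W$ generated by $B''_1, \dots, B''_t$ then satisfies $[x] \in W \subseteq V \cap V'$, and $W \subseteq U$ since $W \subseteq V \subseteq U$. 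So $W \in \mcalu$ as required.

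The main obstacle I expect is the bookkeeping in passing between $M^k$ and the quotient $\spkm$: I need to make sure that "$V$ generated by pairwise disjoint balls is contractible" and "the intersection of two generators-objects, localized near a point, is again a generators-object in $\mcalu$" hold at the level of the quotient, not just of $M^k$. The cleanest way is to note that away from the singular strata the quotient map $M^k \to \spkm$ is a covering onto its image modulo the $\Sigma_k$-action, and that a type-(\ref{eqn:type1}) object $V$ is precisely the image of a disjoint union of open boxes permuted freely-enough that the quotient is a finite disjoint union of contractible pieces, one per $\stk$-class of index function $f$; restricting to the single component containing $[x]$ reduces everything to the obvious statements about products of balls. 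Once these local structure facts are in hand, Lemma~\ref{lem:coverX} applies verbatim and gives $|\Bt_0(U)| = |\mcalu| \simeq U$.
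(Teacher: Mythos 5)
There is a genuine gap at the very start of your argument: you treat $\Bt_0(U)$ as ``a poset ordered by inclusion,'' but by Definition~\ref{defn:Bbp} the morphisms of $\Bb_0(U)$ (and hence of the full subposet $\Bt_0(U)$) are only those inclusions lying in $\mcalw_{\Bb(\spkm)}$ (Definition~\ref{defn:mcalw_Bb}), and every such morphism preserves degree. Consequently $\Bt_0(U)$ splits as a coproduct of categories $\coprod_{r=1}^{k}\Bt_0^{(r)}(U)$ over degree strata, and so does its nerve: $|\Bt_0(U)| = \coprod_r |\Bt_0^{(r)}(U)|$. Your cover $\mcalu$, by contrast, is the poset of type-(\ref{eqn:type1}) subsets of $U$ ordered by \emph{all} inclusions; Lemma~\ref{lem:coverX} applied to $(U,\mcalu)$ computes $|\mcalu|$, which has strictly more simplices than $|\Bt_0(U)|$ (any chain of inclusions that crosses degree strata is a simplex of $|\mcalu|$ but not of $|\Bt_0(U)|$). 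In particular $|\mcalu|$ can be connected while $|\Bt_0(U)|$ never is when more than one stratum is occupied, so the asserted identification ``$|\mcalu| = |\Bt_0(U)|$'' is false and the rest of your argument does not compute the object named in the proposition.

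This degree-stratification is precisely what drives the paper's proof. Instead of applying Lemma~\ref{lem:coverX} once to $U$, it applies it, for each $r$, to the single stratum $X := U \cap \left(\Sp^r(M)\setminus\Sp^{r-1}(M)\right)$ with cover $\left\{V\cap X : V\in\Bt_0^{(r)}(U)\right\}$, and then uses the poset isomorphism $\Bt_0^{(r)}(U) \cong \{V\cap X\}$. Restricting to a fixed stratum is exactly what makes the morphism discrepancy vanish: within degree $r$, every inclusion of cover elements comes from a $\mcalw_{\Bb(\spkm)}$-morphism (the $\pi_0$ condition is automatic since each $V$ of the form (\ref{eqn:type1}) is connected). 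Your contractibility and refinement observations are in the right spirit and essentially reappear there, but only after the stratification; the ``bookkeeping in passing to the quotient'' you flagged as the main obstacle is not where the argument breaks.
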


\begin{proof}
	Let $U$ be an open subset of  $\spkm$. Since $\Bt_0(U) = \coprod_{r=1}^{k} \Bt_0^{(r)}(U)$, where $\Bt_0^{(r)}(U) \subseteq \Bt_0(U)$ is the full subposet of objects of degree $r$ (see Definition~\ref{defn:degree}), it suffices to show that 
	\[
	|\Bt_0^{(r)}(U)| \simeq U \cap \left(\Sp^r(M) \backslash \Sp^{r-1}(M)\right) =: X.
	\]
	
	Let
	\[
	\widetilde{\mcalb}_0^{(r)}(U) \cap X := \left\{ V \cap X| \; V \in \widetilde{\mcalb}_0^{(r)}(U)\right\}.
	\]
	One can see that this is a cover of $X$, and satisfies conditions (a) and (b) of Lemma~\ref{lem:coverX}. Therefore, one has $|\widetilde{\mcalb}_0^{(r)}(U) \cap X| \simeq X$. Furthermore, it is clear that the poset $\widetilde{\mcalb}_0^{(r)}(U) \cap X$ is isomorphic to $\widetilde{\mcalb}_0^{(r)}(U)$. Hence,
	\[
	|\widetilde{\mcalb}_0^{(r)}(U)| \cong  |\widetilde{\mcalb}_0^{(r)}(U) \cap X| \simeq X, 
	\]
	as we needed to show. 
\end{proof}  

\begin{lem} \label{lem:BtBb}
	For every open subset $U \subseteq \spkm$, the map $|\Bt_p(U)| \to |\Bb_p(U)|$ induced by the inclusion functor $i \colon \Bt_p(U) \hra \Bb_p(U)$ is a homotopy equivalence.  
\end{lem}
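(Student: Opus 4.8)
\textbf{Proof proposal for Lemma~\ref{lem:BtBb}.}

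The plan is to exhibit a deformation retraction of the poset $\Bb_p(U)$ onto $\Bt_p(U)$ at the level of nerves, using the construction $\widehat{(-)}$ from Remark~\ref{rmk:objects_Bb}. Recall that for $V \in \Bb(\spkm)$ generated by $B_1, \dots, B_r$, the object $\widehat{V}$ is the object of type (\ref{eqn:type1}) with the same generators, so that $\widehat{V} = V$ exactly when $V$ has the form (\ref{eqn:type1}), and in general there is an inclusion $\tau_V \colon V \hra \theta\lambda(\widehat V) = \widehat V$ which is a morphism of $\Bb(\spkm)$ with $\pi_0(\tau_V)$ a bijection (it is even a weak equivalence of type (b)). First I would check that $\widehat{(-)}$ is functorial on $\Bb(U)$: a morphism $V \hra V'$ (inclusion of open sets) forces every generator of $V$ to be contained in a generator of $V'$, hence induces $\widehat V \hra \widehat{V'}$, and the squares with the maps $\tau$ commute because all arrows are inclusions of subsets of $\spkm$. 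Moreover $\widehat V$ lies in $\Bt_0(U)$ since $\widehat V \subseteq V' \subseteq U$ whenever... more carefully, $\widehat V$ is the union over all maps (not just surjections) built from the generators of $V$, so one must note $\widehat V \subseteq \theta\lambda(V)$ and that $\theta\lambda(V) \subseteq U$ may fail — so instead I would retract onto the subposet and argue the inclusion $\widehat V \subseteq U$ holds because each factor $B_{f(i)}$ of $\widehat V$ already appears as a factor of some point-configuration in $V \subseteq U$; this is where I must be careful, and I expect it to be the main obstacle (see below).

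Granting functoriality, $\widehat{(-)}$ extends objectwise to a functor on strings: send $V_0 \to \cdots \to V_p$ to $\widehat{V_0} \to \cdots \to \widehat{V_p}$, which lands in $\Bt_p(U)$; call this functor $\pi \colon \Bb_p(U) \to \Bt_p(U)$. The componentwise inclusions $\tau_{V_j} \colon V_j \hra \widehat{V_j}$ assemble, for each object of $\Bb_p(U)$, into a morphism of strings from $(V_\bullet)$ to $(\widehat{V_\bullet})$; since each $\tau_{V_j}$ satisfies condition (b) of Definition~\ref{defn:mcalw_Bb} (indeed $\widehat{V_j} = \theta\lambda(\widehat{V_j})$ and $\pi_0(\tau_{V_j})$ is a bijection), these are morphisms of $\Bb_p(U)$ in the sense of Definition~\ref{defn:Bbp}(i). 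Thus $\tau$ is a natural transformation $\mathrm{id}_{\Bb_p(U)} \Rightarrow i \circ \pi$, and on $\Bt_p(U)$ one has $\pi \circ i = \mathrm{id}$ strictly (if every $V_j$ is already of type (\ref{eqn:type1}) then $\widehat{V_j} = V_j$). A natural transformation between functors of posets/categories induces a homotopy between the geometric realizations, so $|i|$ and $|\pi|$ are mutually inverse up to homotopy, giving $|\Bt_p(U)| \simeq |\Bb_p(U)|$ as claimed.

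The step I expect to be delicate is verifying that $\widehat V \in \Bt_0(U)$, i.e. that $\widehat V$ is actually an open \emph{subset of $U$}, not merely an abstract object of $\Bb(\spkm)$. The object $V$ of type (\ref{eqn:type2}), being generated by $B_1,\dots,B_r$, is a union of pieces $[B_{g(1)} \times \cdots \times B_{g(k)}]$ over \emph{some} set of maps $g$; passing to $\widehat V$ enlarges this to \emph{all} surjections onto $\{1,\dots,r\}$. I need that every such piece already sits inside $U$. The cleanest route is to observe that the image of $B_{g(1)}\times\cdots\times B_{g(k)}$ in $\spkm$ depends only on the \emph{set} of generators actually used by $g$ (because $\stk$ collapses repetitions via the surjection clause of Definition~\ref{defn:sim_sp}), together with the fact that $U$ is open and the generators $B_i$ are connected: any configuration using exactly the generators indexed by a subset $T \subseteq \{1,\dots,r\}$ lies in the closure of configurations using all of $\{1,\dots,r\}$, which are in $U$, and openness of $U$ combined with the explicit local model of $\spkm$ near such strata forces these boundary configurations into $U$ as well. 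Making this last point rigorous is essentially the same local analysis underlying Proposition~\ref{prop:Bbzu} and Lemma~\ref{lem:coverX}, so I would cite the structure of those arguments rather than repeat them.
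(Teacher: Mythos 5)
Your overall strategy — build a retraction $\Bb_p(U)\to\Bt_p(U)$ using $\widehat{(-)}$ and deduce the homotopy equivalence from a natural transformation — is the right idea and is essentially what the paper does, except the paper phrases it via Quillen's Theorem A: the over category $i\downarrow(V_0\to\cdots\to V_p)$ has terminal object $\widehat{V_0}\to\cdots\to\widehat{V_p}$, hence is contractible. But your proof as written contains a sign error that runs through the whole argument and creates a phantom difficulty.

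The inclusion goes the other way: $\widehat{V}\subseteq V$, not $V\subseteq\widehat{V}$. Recall that (\ref{eqn:type1}) is the disjoint union over \emph{surjections} only, while (\ref{eqn:type2}) is over \emph{all} maps; since surjections form a subset, the type-(\ref{eqn:type1}) object $\widehat{V}$ is a subspace of $V$. Likewise the identity you write, $\theta\lambda(\widehat V)=\widehat V$, is false: $\theta\lambda(\widehat V)$ is always of type (\ref{eqn:type2}), and Remark~\ref{rmk:objects_Bb} says precisely that $\theta\lambda(\widehat V)=V$ (not $\widehat V$). Consequently there is no natural transformation $\mathrm{id}\Rightarrow i\circ\pi$; what exists is $i\circ\pi\Rightarrow\mathrm{id}$, coming from the inclusions $\widehat{V_j}\hookrightarrow V_j$ (which are weak equivalences of type (b) in Definition~\ref{defn:mcalw_Bb}, with $W=\widehat{V_j}$ and $V'=V_j=\theta\lambda(\widehat{V_j})$). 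The direction flip is harmless for the conclusion — a natural transformation in either direction still gives a homotopy $|i|\,|\pi|\simeq\mathrm{id}$ — but it must be fixed, because your third paragraph is entirely devoted to worrying whether $\widehat V\subseteq U$, which you (correctly, given your setup) thought was the main obstacle. Once the containment is corrected, this "delicate step" evaporates: $\widehat V\subseteq V\subseteq U$ is automatic, and no local analysis of $\spkm$ near diagonal strata is needed. What you do still need to check, and gesture at only briefly, is that $\widehat{(-)}$ is monotone on $\Bb(U)$ (so that $\pi$ is a functor, or equivalently that $\widehat{V_0}\to\cdots\to\widehat{V_p}$ is actually a terminal object of the over category). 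This is the one genuine verification the paper's proof also relies on.
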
 

\begin{proof}
	Let $(V_0 \to \cdots \to V_p) \in \Bb_p(U)$. By Quillen's Theorem A, it suffices to show that the over category  $i \downarrow (V_0 \to \cdots \to V_p)$ is contractible, and it is since it has a terminal object, namely $\widehat{V_0} \to \cdots \to \widehat{V_p}$. (See Remark~\ref{rmk:objects_Bb} for the definition of $\widehat{(-)}$.) 
\end{proof}   

The following lemma is the analog of \cite[3.6 and 3.7]{wei99} and \cite[Lemma 6.8]{pryor15}. To prove it we  use the same approach as Weiss, but with different categories. For our purposes, we need to recall the Grothendieck construction. Let $\varphi \colon \mcalc \to \textup{Cat}$ be a functor from a small category to the category of small categories. Associated with $\varphi$ is a category $\int_{\mcalc} \varphi$, called the \textit{Grothendieck construction}, and defined as follows. Objects are pairs $(c, x)$ where $c \in \mcalc$ and $x \in \varphi(c)$. A morphism $(c, x) \to (c', x')$ consists of a pair $(f, g)$ where $f \colon c \to c'$ is a morphism of $\mcalc$ and $g \colon \varphi(f)(x) \to x'$ is a morphism of $\varphi(c')$.

\begin{lem}  \label{lem:hfiber}
	Let $U$ be an open subset of $\spkm$. Let $W \in \Bt_0(U)$ and let $x_W \in |\Bt_0(U)|$ be the $0$-simplex corresponding to $W$. Let $\mcalb'$ be another good basis  for the topology of $M$.
	\begin{enumerate}
		\item[(i)]  Then the homotopy fiber over $x_W$ of the map $|\Bt_p(U)| \to |\Bt_0(U)|$, induced by $(V_0 \to \cdots \to V_p) \mapsto V_p$, is homotopy equivalent to $|\Bt_{p-1}(W)|$. 
		\item[(ii)] Let $f \colon V \to V'$ be a morphism of $\Bpb(\spkm)$ such that $\deg(V) = \deg(V')$ and $\pi_0(f)$ is a bijection. Then the map $|\Bt_p(f)| \colon |\Bt_p(V)| \to |\Bt_p(V')|$ is a homotopy equivalence.  
		\item[(iii)] If $\mcalb \subseteq \mcalb'$, the canonical  map $|\Bt_p(U)| \to |\widetilde{\mcalb'}_p(U)|$  is a homotopy equivalence.  
	\end{enumerate}  
\end{lem}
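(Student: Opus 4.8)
The plan is to prove the three parts in the order given, using each as a building block for later ones, following Weiss's inductive strategy (\cite[3.6, 3.7]{wei99}) but adapted to the posets $\Bt_p$.

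For part (i), I would use the Grothendieck construction just recalled. Consider the functor $\varphi$ from $\Bt_0(U)$ to $\textup{Cat}$ that sends an object $W$ to the poset $\Bt_{p-1}(W)$ of strings $V_0 \to \cdots \to V_{p-1}$ lying in $\Bt_{p-1}(W)$, and sends a morphism $W \to W'$ to the obvious inclusion-induced functor $\Bt_{p-1}(W) \to \Bt_{p-1}(W')$. The key observation is that $\int_{\Bt_0(U)} \varphi$ is isomorphic to $\Bt_p(U)$: a pair $(W, (V_0 \to \cdots \to V_{p-1}))$ with a compatibility morphism corresponds exactly to the string $V_0 \to \cdots \to V_{p-1} \to W$ of length $p$, and the projection $(c,x) \mapsto c$ becomes the functor $(V_0 \to \cdots \to V_p) \mapsto V_p$. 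By Thomason's theorem (homotopy colimit over the base of the nerves of the fibers), $|\Bt_p(U)|$ is the homotopy colimit over $\Bt_0(U)$ of $W \mapsto |\Bt_{p-1}(W)|$. Since by Proposition~\ref{prop:Bbzu} the geometric realization of $\Bt_0(U)$ is the honest space $U$ (or rather a union of the strata $X$), and since the homotopy fiber of a homotopy colimit over a space is computed pointwise, the homotopy fiber over $x_W$ is $|\Bt_{p-1}(W)|$. A cleaner alternative is Quillen's Theorem B applied directly to the functor $(V_0 \to \cdots \to V_p) \mapsto V_p$: one shows the relevant homotopy-fiber category over $W$ (the comma construction) is equivalent to $\Bt_{p-1}(W)$, and that the functor has the requisite "left/right cofinality" property forced by the stratification. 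I expect the bookkeeping around which strata $\Sp^r(M) \backslash \Sp^{r-1}(M)$ an object lands in to be the fiddly point here.

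For part (ii), I would argue by induction on $p$. The base case $p = 0$ is essentially Proposition~\ref{prop:Bbzu}: a morphism $f \colon V \to V'$ with $\deg(V) = \deg(V')$ and $\pi_0(f)$ a bijection induces, at the level of covers of the top stratum $X$, a map that respects the combinatorics, so $|\Bt_0(V)| \to |\Bt_0(V')|$ matches the inclusion $V \cap X' \hra V' \cap X'$ up to the equivalences of Proposition~\ref{prop:Bbzu}, and since $\pi_0$ is a bijection and degrees agree this is an equivalence. For the inductive step, compare the two fibration sequences coming from part (i): the maps $|\Bt_p(V)| \to |\Bt_0(V)|$ and $|\Bt_p(V')| \to |\Bt_0(V')|$ are compatible via $f$, the base map $|\Bt_0(f)|$ is an equivalence by the $p=0$ case, and on fibers over a $0$-simplex $x_W$ of $|\Bt_0(V)|$ the induced map is $|\Bt_{p-1}(W)| \to |\Bt_{p-1}(W')|$ for the corresponding object $W'$ of $\Bt_0(V')$; this is an equivalence by the inductive hypothesis since $W \to W'$ again has bijective $\pi_0$ and equal degree. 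The five lemma (or the long exact sequence of homotopy groups, being careful about basepoints and $\pi_0$) then gives that $|\Bt_p(f)|$ is an equivalence.

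For part (iii), suppose $\mcalb \subseteq \mcalb'$. Again induct on $p$. For $p = 0$, both $|\Bt_0(U)|$ and $|\widetilde{\mcalb'}_0(U)|$ are homotopy equivalent to $U$ by Proposition~\ref{prop:Bbzu} (that proposition is stated for an arbitrary good basis), and the canonical map is compatible with these equivalences — concretely, $\widetilde{\mcalb}_0^{(r)}(U) \cap X$ is a refinement of $\widetilde{\mcalb'}_0^{(r)}(U) \cap X$ as covers of $X$, and a refinement of covers satisfying the hypotheses of Lemma~\ref{lem:coverX} induces an equivalence on nerves compatibly with the equivalences to $X$. For the inductive step, use part (i) for both bases: the map $|\Bt_p(U)| \to |\widetilde{\mcalb'}_p(U)|$ is a map of fibrations over $|\Bt_0(U)| \to |\widetilde{\mcalb'}_0(U)|$, the base map is an equivalence by the $p=0$ case, and over $x_W$ the map on fibers is $|\Bt_{p-1}(W)| \to |\widetilde{\mcalb'}_{p-1}(W)|$, an equivalence by the inductive hypothesis; conclude by the five lemma as before. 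The main obstacle throughout is making the comparison of fibration sequences genuinely rigorous — ensuring the maps really are (quasi)fibrations so that Theorem B / the long exact sequence applies, and tracking connected components correctly at the bottom of the Taylor-tower-like induction; but this is exactly the pattern Weiss uses in \cite[3.6, 3.7]{wei99}, so the argument should go through with the posets $\Bt_p$ in place of his.
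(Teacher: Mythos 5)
Your overall strategy matches the paper's: identify $\Bt_p(U)$ with a Grothendieck construction $\int_{\Bt_0(U)} \varphi$ where $\varphi(W) = \Bt_{p-1}(W)$, apply Thomason's theorem to get $|\Bt_p(U)| \simeq \mathrm{hocolim}_{W \in \Bt_0(U)} |\Bt_{p-1}(W)|$, identify the map to $|\Bt_0(U)|$ with the map of hocolims to $\mathrm{hocolim}\, \ast$, and then deduce (ii) and (iii) by comparing fibration sequences. But there is a concrete gap in your argument for (i): the assertion that ``the homotopy fiber of a homotopy colimit over a space is computed pointwise'' is false in general. The map $\mathrm{hocolim}_{\mcalc} F \to \mathrm{hocolim}_{\mcalc} \ast \simeq |\mcalc|$ has its homotopy fiber over $x_c$ equivalent to $F(c)$ only when $F$ sends every morphism of $\mcalc$ to a homotopy equivalence (this is the quasifibration criterion of Quillen that the paper cites). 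In the case at hand this means you must already know that $|\Bt_{p-1}(-)| \colon \Bt_0(U) \to \mathrm{Top}$ sends all morphisms to homotopy equivalences before you can conclude anything about the fiber of $|\Bt_p(U)| \to |\Bt_0(U)|$.

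That missing hypothesis is exactly a special case of your part (ii) in degree $p-1$ (since the morphisms of $\Bt_0(U)$ are precisely inclusions that preserve degree and induce a bijection on $\pi_0$). So as written your proof of (i) for a given $p$ secretly depends on (ii) for $p-1$, while your proof of (ii) at level $p$ depends on (i) at level $p$; this is not a circularity, but it does mean the two statements must be proved by a single simultaneous induction on $p$, not sequentially as you present them. This is precisely how the paper handles it: the induction hypothesis there is the conjunction of the fiber identification and the claim that $|\Bt_p(-)|$ takes morphisms of $\Bt_0(U)$ to homotopy equivalences, with the latter established at each stage by the five-lemma argument on the quasifibration sequences (the same argument you give for (ii)). If you restructure your proof as that joint induction and replace the ``pointwise fiber'' hand-wave with the explicit quasifibration criterion, the argument is sound; the alternative route via Quillen's Theorem B that you mention would require the same verification in disguise.
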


\begin{proof}
	We begin with the first part. We will  prove by induction on $p$ that the homotopy fiber of the map $|\Bt_p(U)| \to |\Bt_0(U)|$ over $x_W$ is $|\Bt_{p-1}(W)|$, and the functor $|\Bt_p(-)| \colon \Bt_0 (U) \to \textup{Top}$ takes all morphisms to homotopy equivalences. For $p = 0$, the functor $|\Bt_0(-)|$ has the required property by Proposition~\ref{prop:Bbzu}. 
	Now for $p > 0$, consider the functor $\varphi \colon \Bt_0(U) \to \textup{Cat}$ defined as $\varphi(V) = \Bt_{p-1}(V)$,  and the
	Grothendieck construction $\int_{\Bt_0(U)} \varphi$. The functor
	\[
	\int_{\Bt_0(U)} \varphi \to \Bt_p(U),  \quad (V, V_0 \to \cdots \to V_{p-1}) \mapsto (V_0 \to \cdots \to V_{p-1} \to V)
	\]
	 is an isomorphism of categories with inverse $(V_0 \to \cdots \to V_p) \mapsto (V_p, V_0 \to \cdots \to V_{p-1})$. Moreover, by the Thomason homotopy colimit theorem \cite{thomason79}, one has a natural homotopy equivalence $\underset{V \in \Bt_0(U)}{\textup{hocolim}} \, |\Bt_{p-1}(V)| \simeq |\int_{\Bt_0(U)} \varphi|$. So 
	\[
	|\Bt_p(U)| \simeq \underset{V \in \Bt_0(U)}{\textup{hocolim}} \, |\Bt_{p-1}(V)|.
	\]
	Since $|\Bt_0(U)| \simeq \underset{V \in \Bt_0(U)}{\textup{hocolim}} \, *$, one can identified the map $|\Bt_p(U)| \to |\Bt_0(U)|$ we are interested in as the canonical map 
	\[
	\underset{V \in \Bt_0(U)}{\textup{hocolim}} \, |\Bt_{p-1}(V)| \to \underset{V \in \Bt_0(U)}{\textup{hocolim}} \, *.
	\]
	Since the functor $|\Bt_{p-1}(-)| \colon \Bt_0(U) \to \textup{Top}$ takes all morphisms to homotopy equivalences by the induction hypothesis, it follows that this latter map is a quasifibration by a result of Quillen  \cite{quillen75}. Therefore its homotopy fiber is homotopy equivalent to its actual fiber over $x_{W}$, which can be taken to be $|\Bt_{p-1}(W)|$. 
	
	Now we need to prove that the functor $|\Bt_p(-)| \colon \Bt_0(U) \to \textup{Top}$ sends every morphism to a homotopy equivalence. Let $f \colon V \to V'$ be a morphism of  $\Bt_0(U)$. Consider the following map of quasifibration sequences.
	\[
	\xymatrix{|\Bt_{p-1}(W)| \ar[r] \ar[d] & |\Bt_p(V)| \ar[r]  \ar[d]^-{\Bt_p(f)} & |\Bt_0(V)| \ar[d]^-{\Bt_0(f)} \\
		|\Bt_{p-1}(W)|  \ar[r]  &  |\Bt_p(V')|  \ar[r]   &   |\Bt_0(V')|}
	\]
	Since the righthand vertical map is a homotopy equivalence by the base case, and since the lefthand vertical map can be taken to be the identity, it follows that the middle vertical map is homotopy equivalence as well. This proves the first part.
	
	Using (i) together with Proposition~\ref{prop:Bbzu}, one can easily prove (ii) and (iii) by induction on $p$.   
\end{proof}	

\begin{lem}  \label{lem:hfiber2}
	Let $U$ be an open subset of $\spkm$. 
	\begin{enumerate}
		\item[(i)] Let $f \colon V \to V'$ be as in Lemma~\ref{lem:hfiber}. Then the map $\Bb_p(f) \colon |\Bb_p(V)| \to |\Bb_p(V')|$ is a homotopy equivalence. 
		\item[(ii)] If $\mcalb'$ is another good basis containing $\mcalb$,  the  map $|\Bb_p(U)| \to |\Bpb_p(U)|$  is a homotopy equivalence. 
	\end{enumerate}
\end{lem}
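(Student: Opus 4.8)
The plan is to deduce both parts by comparing $\Bb_p(-)$ with the "hatted" subposet $\Bt_p(-)$ via Lemma~\ref{lem:BtBb}, which tells us that for every open $U$ the inclusion $i \colon \Bt_p(U) \hra \Bb_p(U)$ induces a homotopy equivalence $|\Bt_p(U)| \to |\Bb_p(U)|$. So the strategy for both (i) and (ii) is: reduce the statement about $\Bb_p$ to the corresponding statement about $\Bt_p$, which was already established in Lemma~\ref{lem:hfiber}(ii) and (iii) respectively.

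For part (i), let $f \colon V \to V'$ be a morphism of $\Bpb(\spkm)$ with $\deg(V) = \deg(V')$ and $\pi_0(f)$ a bijection. First I would note that the assignment $U \mapsto \Bt_p(U)$ is functorial in $U$ and that the inclusions $\Bt_p(U) \hra \Bb_p(U)$ assemble into a natural transformation. Hence there is a commuting square
\[
\xymatrix{
|\Bt_p(V)| \ar[r]^-{|\Bt_p(f)|} \ar[d]_-{\simeq} & |\Bt_p(V')| \ar[d]^-{\simeq} \\
|\Bb_p(V)| \ar[r]_-{|\Bb_p(f)|} & |\Bb_p(V')|
}
\]
in which the two vertical maps are homotopy equivalences by Lemma~\ref{lem:BtBb} and the top horizontal map is a homotopy equivalence by Lemma~\ref{lem:hfiber}(ii). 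A two-out-of-three argument then forces $|\Bb_p(f)|$ to be a homotopy equivalence. Part (ii) is handled identically: if $\mcalb \subseteq \mcalb'$, then for every open $U \subseteq \spkm$ we have the analogous naturality square with $|\Bt_p(U)| \to |\widetilde{\mcalb'}_p(U)|$ on top (a homotopy equivalence by Lemma~\ref{lem:hfiber}(iii)) and the vertical maps $|\Bt_p(U)| \to |\Bb_p(U)|$ and $|\widetilde{\mcalb'}_p(U)| \to |\Bpb_p(U)|$ being homotopy equivalences by (the $\mcalb'$-version of) Lemma~\ref{lem:BtBb}; again two-out-of-three gives that $|\Bb_p(U)| \to |\Bpb_p(U)|$ is a homotopy equivalence.

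The only point that requires a little care — and the place I'd expect the main friction — is checking that the relevant squares actually commute on the nose (or at least up to homotopy), i.e.\ that $\Bt_p(-) \hra \Bb_p(-)$ is genuinely natural in the open-set variable and that Lemma~\ref{lem:BtBb} applies verbatim to the basis $\mcalb'$ as well as to $\mcalb$. Both are essentially formal: the inclusion of the full subposet of strings of objects of type (\ref{eqn:type1}) is compatible with the restriction maps $\Bb_p(U') \to \Bb_p(U)$ for $U \subseteq U'$ since those are themselves inclusions of posets, and the proof of Lemma~\ref{lem:BtBb} via Quillen's Theorem A (using the terminal object $\widehat{V_0} \to \cdots \to \widehat{V_p}$ in the over category) did not use any special property distinguishing $\mcalb$ from an arbitrary good basis. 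Once these compatibilities are in hand, the lemma follows immediately from two-out-of-three.
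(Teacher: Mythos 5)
Your proposal is correct and is precisely the argument the paper has in mind: the paper's proof of Lemma~\ref{lem:hfiber2} is the one-line remark that it "follows immediately from Lemmas~\ref{lem:BtBb} and \ref{lem:hfiber}," and your commuting squares with the $\Bt_p \hookrightarrow \Bb_p$ comparison maps as vertical arrows, followed by two-out-of-three, is exactly how that deduction goes. Your side observation that Lemma~\ref{lem:BtBb} applies verbatim to $\mcalb'$ (because the terminal-object/Quillen's Theorem A argument uses nothing specific to $\mcalb$) is the only non-tautological point, and you identified it correctly.
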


\begin{proof}
	This follows immediately from Lemmas~\ref{lem:BtBb} and \ref{lem:hfiber}. 
\end{proof}

The proof of the main result of this section is based on the fact that $F^!$, the homotopy right Kan extension of  $F \colon \Bb(\spkm) \to \mcalm$, can be written as the homotopy limit of a certain  diagram of cofunctors, $F^{!p}$, defined as follows.

\begin{defn}  \label{defn:fsrikp}
	Let $F \colon \Bb(\spkm) \to \mcalm$ be a  cofunctor. Define a new cofunctor $F^{!p} \colon \mcalo(\spkm) \to \mcalm$ as 
	\begin{equation} \label{eqn:fsrikp}
	F^{!p}(U) = \underset{(V_0 \to \cdots \to V_p) \in \Bb_p(U)}{\textup{holim}} \, F(V_0).
	\end{equation}
\end{defn} 

\begin{lem}  \label{lem:tot}
	For every cofunctor $F \colon \Bb(\spkm) \to \mcalm$, $F^{!} \colon \mcalo(\spkm) \to \mcalm$ is weakly equivalent to the homotopy totalization of the cosimplicial object $[p] \mapsto F^{!p}$. That is, 
	\[
	F^! \simeq \textup{Tot} \left([p] \mapsto F^{!p}\right). 
	\] 
\end{lem}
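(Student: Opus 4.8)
## Proof Proposal

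The plan is to recognize the right-hand side as a standard homotopy right Kan extension computed via the bar-type resolution, and to use the fact that $\Bb_p(U)$ is the $p$-th level of the simplicial replacement (nerve) of the poset $\Bb(U)$ up to the weak-equivalence decoration on its morphisms. Concretely, $F^{!}(U) = \mathrm{holim}_{V \in \Bb(U)} F(V)$ by \eqref{eqn:fsrik}, and the homotopy limit of a diagram over a small category $\mcalc$ is computed by the totalization of the cosimplicial object whose $p$-th term is $\prod_{c_0 \to \cdots \to c_p} F(c_0)$, the product running over strings of $p$ composable morphisms in $\mcalc$, with the cosimplicial structure maps given by inserting identities, composing, and dropping the first object. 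This is the Bousfield--Kan formula for $\mathrm{holim}$.

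First I would make precise that the category indexing the product in the $p$-th cosimplicial level of the Bousfield--Kan formula for $\mathrm{holim}_{V \in \Bb(U)} F(V)$ is exactly (the object set of) $\Bb_p(U)$ from Definition~\ref{defn:Bbp}: both are strings $V_0 \to \cdots \to V_p$ of composable morphisms in $\Bb(U)$. Then I would observe that $F^{!p}(U) = \mathrm{holim}_{(V_0 \to \cdots \to V_p) \in \Bb_p(U)} F(V_0)$ agrees, up to weak equivalence, with the plain product $\prod_{(V_0 \to \cdots \to V_p)} F(V_0)$ appearing in the Bousfield--Kan formula. This is the step where one pays attention to the decoration: $\Bb_p(U)$ is not a discrete set but a poset whose morphisms are the levelwise-$\mcalw_{\Bb(\spkm)}$ natural transformations, so a homotopy limit over it differs a priori from the product over its objects. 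However, since $F \in \mcalfb(\Bb(\spkm);\mcalm)$ sends morphisms of $\mcalw_{\Bb(\spkm)}$ to weak equivalences (Definition~\ref{defn:mcalfb_spk}(a)), the functor $(V_0 \to \cdots \to V_p) \mapsto F(V_0)$ sends every morphism of $\Bb_p(U)$ to a weak equivalence, and hence — provided $\Bb_p(U)$ decomposes as a disjoint union of connected components each of which is either contractible or at least has the homotopy type making the holim reduce to evaluation at one object — the homotopy limit over $\Bb_p(U)$ is weakly equivalent to the product over a set of representatives of $\pi_0(\Bb_p(U))$, which one checks to be the same as the Bousfield--Kan product (each undecorated string being its own component, or each component being contractible by a terminal-object argument analogous to the $\widehat{(-)}$ trick of Remark~\ref{rmk:objects_Bb} and Lemma~\ref{lem:BtBb}). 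Once the levels are identified compatibly with the cosimplicial structure maps (insertion of identity $\leftrightarrow$ codegeneracy, composition and dropping $\leftrightarrow$ cofaces), the identification $[p] \mapsto F^{!p}$ with the Bousfield--Kan cosimplicial object is complete, and $\mathrm{Tot}$ of it is $F^{!}$ by the cited formula.

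The main obstacle I anticipate is exactly the reconciliation between "homotopy limit over the decorated poset $\Bb_p(U)$" (as literally written in Definition~\ref{defn:fsrikp}) and "product over the set of composable strings" (as demanded by the Bousfield--Kan formula). One must argue that the extra morphisms in $\Bb_p(U)$ do not change the homotopy type, using the weak-equivalence hypothesis on $F$ together with the structure of $\Bb_p(U)$; the cleanest route is to show each connected component of $\Bb_p(U)$ has a terminal object (the $\widehat{(-)}$-completion applied levelwise, cf. the proof of Lemma~\ref{lem:BtBb}) so that the holim over it collapses to the value at that terminal object, and then to match up the resulting indexing set with the Bousfield--Kan one. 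Everything else — naturality in $U$, fibrancy of the values, and compatibility of the cosimplicial maps — is routine bookkeeping. I would also remark that this lemma is the exact analogue of the standard fact that a homotopy right Kan extension is the totalization of its cobar resolution, so the reader familiar with Bousfield--Kan will find it unsurprising; the content is merely checking that our decorated posets $\Bb_p(U)$ are the correct indexing categories.
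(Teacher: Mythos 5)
Your core claim --- that $F^{!p}(U) = \text{holim}_{(V_0 \to \cdots \to V_p) \in \Bb_p(U)} F(V_0)$ is weakly equivalent to the Bousfield--Kan level $\prod_{(V_0 \to \cdots \to V_p)} F(V_0)$ --- is false, and neither of the repairs you sketch fixes it. The category $\Bb_p(U)$ of Definition~\ref{defn:Bbp} has many non-identity morphisms: any levelwise inclusion of strings whose components satisfy condition (a) of Definition~\ref{defn:mcalw_Bb} (for instance shrinking every generating ball) is one. So distinct $p$-strings routinely lie in a common connected component, which means $\pi_0(\Bb_p(U))$ is \emph{not} the set of $p$-strings and "each undecorated string being its own component" is simply wrong. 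Your other fallback makes matters worse: collapsing each component to a terminal object via the $\widehat{(-)}$-trick as in Lemma~\ref{lem:BtBb} leaves you indexed by $\pi_0(\Bb_p(U))$, a strictly smaller set than the Bousfield--Kan one. The two cosimplicial objects are genuinely different, and no bookkeeping identifies $\text{Tot}\left([p] \mapsto F^{!p}\right)$ with the Bousfield--Kan totalization along these lines.

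There is also a hypothesis mismatch: Lemma~\ref{lem:tot} is stated for \emph{every} cofunctor $F \colon \Bb(\spkm) \to \mcalm$, but your key reduction invokes Definition~\ref{defn:mcalfb_spk}(a) to send $\mcalw_{\Bb(\spkm)}$-morphisms to weak equivalences, which is not assumed here. The paper's proof follows Pryor's Lemma 6.10, which sidesteps both issues: roughly, one views $F^!(U)$ as a holim over the Grothendieck construction of $[p] \mapsto \Bb_p(U)$ over $\Delta$, applies a Fubini-type decomposition to rewrite it as $\text{Tot}$ of the levelwise holims $F^{!p}(U)$, and separately checks a cofinality statement comparing the Grothendieck construction with $\Bb(U)$. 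This uses the morphisms of $\Bb_p(U)$ essentially --- they are not noise to be quotiented away --- and requires no hypothesis on $F$.
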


\begin{proof}
	This goes in the same way as the proof of \cite[Lemma 6.10]{pryor15}
\end{proof}

Another lemma we need is the following. 

\begin{lem} \cite[Lemma 4.16]{paul_don17-2}  \label{lem:holim_we}
	Let $\iota \colon \mcalc \to \mcald$ be a functor between small categories. Let $F \colon \mcald \to \mcalm$ be a cofunctor that sends every morphism to a weak equivalence. Assume that that nerve of $\iota$ is a weak equivalence. Then the canonical map $\underset{\mcald}{\textup{holim}}\, F \to \underset{\mcalc}{\textup{holim}}\, F\iota$ is a weak equivalence. 
\end{lem}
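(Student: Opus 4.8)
The plan is to compute both homotopy limits by the Bousfield--Kan cosimplicial replacement and to show that, once $F$ inverts every morphism, the resulting totalization depends only on the weak homotopy type of the nerve.

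First I would recall that for a cofunctor $G \colon \mcale \to \mcalm$ taking values in fibrant objects one has
\[
\underset{\mcale}{\textup{holim}}\, G \;\simeq\; \textup{Tot}\bigl([p] \mapsto \prod_{\sigma \in N_p \mcale} G(\sigma_0)\bigr),
\]
where $\sigma_0$ denotes the initial vertex of the $p$-chain $\sigma$ and the cosimplicial structure is assembled from the simplicial operators on the nerve $N_\bullet \mcale$ together with $G$ applied to the edges of the chains. Applying this to $G = F$ over $\mcald$ and to $G = F\iota$ over $\mcalc$, the canonical map of the lemma becomes $\textup{Tot}$ of a map of cosimplicial objects which at level $p$ is the map of products $\prod_{\sigma \in N_p\mcald} F(\sigma_0) \to \prod_{\tau \in N_p\mcalc} F(\iota\tau_0)$ induced by reindexing along $N_p\iota \colon N_p\mcalc \to N_p\mcald$ (using $(F\iota)(\tau_0) = F(\iota\tau_0)$ to match the factors).

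The heart of the matter is the claim that, for $F$ fixed as in the statement, the cosimplicial object $[p]\mapsto\prod_{\sigma\in K_p} F(\sigma_0)$ attached to a simplicial set $K$ lying over $N\mcald$ (coefficients pulled back from $F$) has a totalization that carries weak equivalences of such simplicial sets to weak equivalences; granting this and the hypothesis that $N\iota$ is a weak equivalence gives the lemma. To prove the claim I would use crucially that $F$ inverts all morphisms: every edge occurring in a chain is sent by $F$ to a weak equivalence, so the coefficients are homotopy locally constant along $N\mcald$, and the cosimplicial object is exactly the one computing the space of sections of the quasifibration over $|N\mcald|$ with fibre $F(d)$ --- the same quasifibration circle of ideas (Quillen's criterion) already exploited in Lemmas~\ref{lem:hfiber} and~\ref{lem:tot}; section spaces of this kind are invariant under base change along a weak equivalence. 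Concretely, I would factor $N\iota$ as $N\mcalc \xrightarrow{j} L \xrightarrow{q} N\mcald$ with $j$ anodyne and $q$ a trivial Kan fibration and handle the two pieces in turn: for $q$, a section together with a fibrewise homotopy (which leaves the pulled-back coefficients literally unchanged) shows that $q$ induces a cosimplicial homotopy equivalence, hence a $\textup{Tot}$-equivalence; for $j$, one inducts over the non-degenerate simplices being attached, each horn filling altering $\textup{Tot}$ only through a homotopy pullback along an $F$-image of a morphism --- hence along a weak equivalence --- so that $\textup{Tot}$ is unchanged.

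The main obstacle, and the reason the hypothesis on $F$ cannot be relaxed (say, to homotopy cofinality of $\iota$), is that one cannot argue level by level: $N_p\iota$ is typically far from a weak equivalence even when $N\iota$ is one --- a single zig-zag of identities in $\mcald$ already produces many $p$-chains with no preimage in $\mcalc$ --- so the maps $\prod_{N_p\mcald}F(\sigma_0)\to\prod_{N_p\mcalc}F(\iota\tau_0)$ are not weak equivalences. The content of the lemma is precisely that these discrepancies are undone by totalization, and this step genuinely needs that $F$ be homotopy constant along each chain. Making the cell-by-cell induction in the anodyne case precise --- tracking Reedy fibrancy of the cosimplicial objects and keeping the identifications of the fibres coherent --- is where the real work lies; the rest is formal manipulation of the $\textup{Tot}$-tower.
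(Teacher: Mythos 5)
Note first that the paper does not itself prove this lemma: it is imported verbatim as \cite[Lemma 4.16]{paul_don17-2}, so there is no in-text argument here to compare against. Assessing your outline on its own merits, the plan is recognizable and plausible: reduce both sides to the Bousfield--Kan cosimplicial replacement, observe that local constancy of the coefficients is what should make $\textup{Tot}$ depend only on the weak homotopy type of the nerve, and then try to establish that invariance by factoring $N\iota$ as an anodyne map followed by a trivial Kan fibration.

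The difficulty is that your argument stops exactly where the content of the lemma sits. You correctly flag that the level-by-level maps $\prod_{N_p\mcald}F(\sigma_0)\to\prod_{N_p\mcalc}F(\iota\tau_0)$ are not equivalences, so everything rests on the claim that totalization is insensitive to enlarging the base by cells when the coefficients are locally constant. The trivial-fibration half is broadly believable (a section plus a fibrewise homotopy, with coefficients constant in the $\Delta^1$-direction, should yield a cosimplicial homotopy equivalence), although even here ``leaves the pulled-back coefficients literally unchanged'' elides the check that the resulting prism data actually satisfies the cosimplicial homotopy identities. For the anodyne half you only assert that ``each horn filling alters $\textup{Tot}$ through a homotopy pullback along an $F$-image of a morphism'' and explicitly defer the induction --- Reedy fibrancy, coherent fibre identifications --- as ``where the real work lies.'' That step is not a formality: once you pass to a simplicial set $L$ that is not the nerve of a category, the cosimplicial object must be (re)defined via the simplex category of $L$ with coefficients pulled back along $L \to N\mcald$; outer-horn fillings $\Lambda^n_0$ and $\Lambda^n_n$ genuinely add a new edge and genuinely need the hypothesis that $F$ inverts it (inner horns are free); and identifying the relevant morphism and showing the resulting square of partial totalizations is a homotopy pullback requires a real matching-object computation. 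Until that induction is carried out, what you have is a correct plan for a proof rather than a proof, and since the present paper only cites the result, I cannot tell you whether \cite{paul_don17-2} chose this route or a slicker one (for instance, passing to the simplex category of $N\mcald$ via the canonical homotopy-cofinal projection, so that the dependence on the nerve becomes manifest from the start).
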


Now we can state and prove the main result of the section. 

\begin{thm} \label{thm:lb_lbp}
	Let $\mcalb'$ be another good basis for the topology of $M$. 
	\begin{enumerate} 
		\item[(i)] Then the category  $\mcall^{\mcalb} (\mcalo(\spkm); \mcalm)$ of $\mcalb$-good and $\mcalb$-linear cofunctors (see Definition~\ref{defn:mcallb}) is the same as the category $\mcall^{\mcalb'} (\mcalo(\spkm); \mcalm)$. That is,
		\[
		\mcall^{\mcalb} (\mcalo(\spkm); \mcalm)   = \mcall^{\mcalb'} (\mcalo(\spkm); \mcalm). 
		\]
		\item[(ii)] For every $A_0, \cdots, A_k \in \mcalm$, one has  
		\[
		\mcall_{\abb}^{\mcalb} (\mcalo(\spkm); \mcalm)   = \mcall_{\abb}^{\mcalb'} (\mcalo(\spkm); \mcalm). 
		\]
	\end{enumerate}	 
\end{thm}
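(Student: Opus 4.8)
The plan is to show that the two categories coincide by showing that a cofunctor $F \colon \mcalo(\spkm) \to \mcalm$ is $\mcalb$-good and $\mcalb$-linear if and only if it is $\mcalb'$-good and $\mcalb'$-linear. Since the roles of $\mcalb$ and $\mcalb'$ are symmetric, it suffices to treat one direction, and by a standard argument (passing through $\mcalb \cup \mcalb'$, which is again a good basis and contains both) it suffices to treat the case $\mcalb \subseteq \mcalb'$. So assume $\mcalb \subseteq \mcalb'$ from now on, and let $F$ be $\mcalb$-good and $\mcalb$-linear; I will show it is $\mcalb'$-good and $\mcalb'$-linear, and conversely.

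First I would reduce $\mcalb'$-goodness to $\mcalb$-goodness. Condition (a) (fibrancy of each $F(U)$) does not mention the basis, so there is nothing to do there. For condition (b), I must check that $F|\mcalbb'(\spkm)$ lies in $\mcalfb(\mcalbb'(\spkm); \mcalm)$, i.e. that $F$ sends morphisms of $\mcalw_{\mcalbb'(\spkm)}$ to weak equivalences. A morphism of type (a) in Definition~\ref{defn:mcalw_Bb} is an honest homotopy equivalence of open subsets of $\spkm$ (the inclusion $V \hra V'$ is a deformation retract when $\deg(V)=\deg(V')$ and $\pi_0$ is a bijection), and since $F$ is $\mcalb$-linear, $F \simeq (F|\mcalbb(\spkm))^!$; I would use Lemma~\ref{lem:hfiber2}(i) to see that $|\Bb_p(V)| \to |\Bb_p(V')|$ is a homotopy equivalence, combine with Lemma~\ref{lem:tot} to conclude $F(V') \to F(V)$ is a weak equivalence. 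A morphism of type (b) factors through $\tau_W \colon W \hra \theta\lambda(W)$, so it suffices to handle $\tau_W$; here I would again use $F \simeq (F|\mcalbb(\spkm))^!$ and note that the homotopy Kan extension only depends on $\mcalbb(U)$, so the relevant statement is again a statement about geometric realizations of the $\Bb_p$ posets that follows from Lemmas~\ref{lem:BtBb}, \ref{lem:hfiber}, \ref{lem:hfiber2}.

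Next I would handle the transfer of linearity: if $F$ is $\mcalb$-linear, i.e. $F \to (F|\mcalbb(\spkm))^!$ is a weak equivalence, then $F \to (F|\mcalbb'(\spkm))^!$ is also a weak equivalence. Fix an open $U \subseteq \spkm$. By Lemma~\ref{lem:tot} applied to both bases, it is enough to compare the cosimplicial objects $[p] \mapsto (F|\mcalbb(\spkm))^{!p}(U)$ and $[p] \mapsto (F|\mcalbb'(\spkm))^{!p}(U)$ levelwise; each is a homotopy limit over $\Bb_p(U)$, resp.\ $\mcalbb'_p(U)$, of $F$ evaluated on the initial term of a string. The inclusion $\mcalbb_p(U) \hra \mcalbb'_p(U)$ has nerve a weak equivalence by Lemma~\ref{lem:hfiber2}(ii); and $F$ restricted to $\mcalbb'_p(U)$ sends all morphisms (which are componentwise in $\mcalw$) to weak equivalences by the goodness just established. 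Hence Lemma~\ref{lem:holim_we} gives that $(F|\mcalbb'(\spkm))^{!p}(U) \to (F|\mcalbb(\spkm))^{!p}(U)$ is a weak equivalence for each $p$, so totalizing, $(F|\mcalbb'(\spkm))^{!} \simeq (F|\mcalbb(\spkm))^{!} \simeq F$. The converse direction (going from $\mcalb'$ to $\mcalb$ under $\mcalb \subseteq \mcalb'$) is proved the same way with the arrows of the comparison reversed, again invoking Lemma~\ref{lem:holim_we} with the now-established fact that $F$ sends morphisms of $\mcalw$ to weak equivalences over the larger indexing category. Part (ii) is then immediate: the extra condition $F(V) \simeq A_i$ when $\deg(V) = i$ is phrased in terms of the degree, which is preserved by the construction $\widehat{(-)}$ and is intrinsic to an object of $\Bb(\spkm)$ independent of which good basis generates it, so being $\mcalb_{\abb}$-good is equivalent to being $\mcalb'_{\abb}$-good once the underlying goodness statements are known; combined with part (i) this gives the equality of the two subcategories.

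The main obstacle I anticipate is bookkeeping around the two types of objects (\ref{eqn:type1}) and (\ref{eqn:type2}) in $\Bb(\spkm)$ and the two-step weak equivalences of type (b) in $\mcalw_{\Bb(\spkm)}$: the clean geometric statements of Lemmas~\ref{lem:hfiber}–\ref{lem:hfiber2} are stated for the $\Bb_p$ posets, but what one ultimately needs is that $F$ itself sends type-(b) morphisms to weak equivalences, which requires threading the identification $F \simeq (F|\mcalbb(\spkm))^!$ through $\tau_W$ and using Remark~\ref{rmk:objects_Bb} to rewrite objects of type (\ref{eqn:type2}) as $\theta\lambda(\widehat{V})$. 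Making sure the naturality (Remark~\ref{rmk:naturality}) is used correctly so that the comparison maps assemble into natural transformations of cosimplicial objects — rather than just levelwise weak equivalences with no coherence — is the delicate point.
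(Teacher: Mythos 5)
Your reduction to $\mcalb\subseteq\mcalb'$, your treatment of type-(a) morphisms via Lemma~\ref{lem:hfiber2}, Lemma~\ref{lem:tot}, and Lemma~\ref{lem:holim_we}, and your argument for the transfer of linearity all match the paper's proof. However, there is a genuine gap in how you handle type-(b) morphisms, i.e.\ the inclusions $\tau_W\colon W\hra\theta\lambda(W)$ for $W\in\Bpb(\spkm)$ of the form (\ref{eqn:type1}) that need not lie in $\Bb(\spkm)$.

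You propose to reduce this case, via $F\simeq(F|\Bb(\spkm))^!$, to ``a statement about geometric realizations of the $\Bb_p$ posets that follows from Lemmas~\ref{lem:BtBb}, \ref{lem:hfiber}, \ref{lem:hfiber2}.'' This cannot work as stated: Lemma~\ref{lem:hfiber2}(i) applies only to inclusions $V\hra V'$ with $\deg V=\deg V'$ \emph{and} $\pi_0$ a bijection, and Lemma~\ref{lem:hfiber2}(ii) to enlargements of the basis. The inclusion $W\hra\theta\lambda(W)$ satisfies neither: it preserves degree but is far from $\pi_0$-bijective, since $\theta\lambda(W)$ has many more components than $W$. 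Indeed, via Proposition~\ref{prop:Bbzu} and Lemma~\ref{lem:BtBb}, $|\Bb_0(W)|\simeq W$ while $|\Bb_0(\theta\lambda(W))|\simeq\theta\lambda(W)$, and these are not homotopy equivalent; so the nerve of $\Bb_p(W)\hra\Bb_p(\theta\lambda(W))$ is \emph{not} a weak equivalence and Lemma~\ref{lem:holim_we} does not apply. The paper avoids this entirely by a different argument for this case: since $\mcalb$ is a basis, one chooses $W'\subseteq W$ with $W'\in\Bb(\spkm)$, $\deg W'=\deg W$, and $\pi_0(W'\hra W)$ a bijection, then forms the commuting square with corners $F(W), F(\theta\lambda(W)), F(W'), F(\theta\lambda(W'))$. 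The two vertical maps are weak equivalences by the already-established type-(a) case, and the bottom map $F(\theta\lambda(W'))\to F(W')$ is a weak equivalence directly from $\mcalb$-goodness of $F$ (both objects lie in $\Bb(\spkm)$), so the top map is too. You should replace your hand-waved reduction with this shrinking-to-the-smaller-basis square; without it the argument for $\mcalb'$-goodness is incomplete.
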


\begin{proof}
	Since $\mcalb \cup \mcalb'$ is again a good basis for the topology of $M$, we can assume without loss of generality that $\mcalb \subseteq \mcalb'$. We want to show that $\mcall^{\mcalb} (\mcalo(\spkm); \mcalm)   \subseteq  \mcall^{\mcalb'} (\mcalo(\spkm); \mcalm)$. So  let $F \in \mcall^{\mcalb} (\mcalo(\spkm); \mcalm)$. We need to show that $F$ is $\mcalb'$-good and $\mcalb'$-linear. 
	
	$\bullet$ Proving that $F$ is $\mcalb'$-good. By definition, we have to show that $F$ satisfies the following two conditions: 
	\begin{enumerate}
		\item[-] $F(V)$ is fibrant for every $V \in \mcalo(\spkm)$.
		\item[-]  $F(f) \colon F(V') \to F(V)$ is a weak equivalence whenever $f \colon V \to V'$ is a morphism of $\mcalw_{\Bpb (\spkm)}$. 
	\end{enumerate}
	The first condition holds since  $F$ is $\mcalb$-good. For the second, let $f \colon V \to V'$ be a morphism of $\mcalw_{\Bpb (\spkm)}$. We need to deal with three cases. 
	\begin{enumerate}
		\item[-] Suppose $\deg(V) = \deg(V')$ and $\pi_0(f)$ is a bijection.  
		Since $F$ is $\mcalb$-linear, the canonical map $F \to (F|\Bb(\spkm))^{!}$ is a weak equivalence. So it suffices to show that the map $(F|\Bb(\spkm))^{!}(f)$ is a weak equivalence. And by Lemma~\ref{lem:tot}, it is enough to  show that the canonical map $F^{!p}(f) \colon  F^{!p}(V') \to F^{!p}(V)$ is a weak equivalence for all $p$. By (\ref{eqn:fsrikp}), this latter map can be rewritten as 
		\begin{equation} \label{eqn:fuup}
		\underset{(V_0 \to \cdots \to V_p) \in \Bb_p(V')}{\textup{holim}} \, F(V_0) \to \underset{(V_0 \to \cdots \to V_p) \in \Bb_p(V)}{\textup{holim}} \, F(V_0).
		\end{equation}
		Since the cofunctor $\Bb_p(V') \to \mcalm, (V_0 \to \cdots \to V_p) \mapsto F(V_0)$ takes all morphisms to weak equivalences (because $F$ is $\mcalb$-good), and since the geometric realization of the inclusion functor  $\Bb_p(V) \hra \Bb_p(V')$ is a weak equivalence (by Lemma~\ref{lem:hfiber2}), it follows that (\ref{eqn:fuup}) is a weak equivalence by Lemma~\ref{lem:holim_we}, as we needed to show. 
		\item[-] Suppose $V' = \theta \lambda (V)$ with $V$ of the form (\ref{eqn:type1}).  Since $\mcalb$ is a basis for the topology of $M$, there exists $W \subseteq V$, $W \in \Bb(\spkm)$, such that the inclusion $W \hra V$ has the following properties: $\deg(V) = \deg(W)$ and $\pi_0(W \hra V)$ is a bijection. The objects $V, \theta \lambda (V), W,$ and $\theta \lambda (W)$ fit into the following commutative square. 
		\[
		\xymatrix{F (V) \ar[d] &  F(\theta \lambda (V)) \ar[l] \ar[d] \\
			F(W)     &    F(\theta \lambda (W)) \ar[l]}
		\]
		The vertical maps are both weak equivalences by the first case, and the bottom map is a weak equivalence since $F$ is $\mcalb$-good. Therefore, so is the top map.  
		\item[-] Now suppose $f$ factors as $V \stackrel{g}{\to} W \stackrel{h}{\to} V'$ where $g$ and $h$ are as in Definition~\ref{defn:mcalw_Bb}. Then $F(g)$ is a weak equivalence by the first case, and $F(h)$ is a weak equivalence as well by the second case. Therefore $F(f) = F(g)F(h)$ is a weak equivalence. 
	\end{enumerate}
	
	$\bullet$ Proving that $F$ is $\mcalb'$-linear. By the commutative triangle
	\[
	\xymatrix{F \ar[r]^-{\sim} \ar[rd]  &  (F|\Bb(\spkm))^{!}  \\
		&   (F|\Bpb(\spkm))^{!} \ar[u],}
	\]
	where each map is the canonical one, it is suffices to show that the vertical map is a weak equivalence, and it is by using Lemmas~\ref{lem:tot}, \ref{lem:hfiber2} and \ref{lem:holim_we}, and  arguing in a similar way as before. 
	
	Similarly, one has $\mcall^{\mcalb'} (\mcalo(\spkm); \mcalm)   \subseteq  \mcall^{\mcalb} (\mcalo(\spkm); \mcalm)$. The second part part of the theorem can be proved in  similar fashion. 
	
\end{proof} 

Thanks to Theorem~\ref{thm:lb_lbp}, we can drop the superscript $\mcalb$ and make the following definition.

\begin{defn}  \label{defn:mcalla}
	Define $\mcall(\mcalo(\spkm); \mcalm)$ (respectively $\mcall_{\abb}(\mcalo(\spkm); \mcalm)$) as the category $\mcall^{\mcalb}(\mcalo(\spkm); \mcalm)$ from Definition~\ref{defn:mcallb} (respectively  $\mcall_{\abb}^{\mcalb}(\mcalo(\spkm); \mcalm)$). 
\end{defn}

\section{Proof of the main result}  \label{sec:main_thm}

We still need one intermediate result before proving Theorem~\ref{thm:main}.

\begin{lem}  \label{lem:mcall_mcalfab}
	\begin{enumerate}
		\item[(i)] The restriction functor 
		\[
		res \colon \mcall (\mcalo(\spkm); \mcalm) \to \mcalfb (\mcalbb(\spkm); \mcalm)
		\]
		is a weak equivalence. 
		\item[(ii)] As usual, for every $A_0, \cdots, A_k  \in \mcalm$, the same statement holds for $\mcall_{\abb} (\mcalo(\spkm); \mcalm)$ and $\mcalfab (\mcalbb(\spkm); \mcalm)$. 
	\end{enumerate}	
\end{lem}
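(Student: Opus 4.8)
The plan is to mimic the structure of Lemma~\ref{lem:PF}: exhibit the homotopy right Kan extension $(-)^!$ as a homotopy inverse to $res$, and check the four conditions of Definition~\ref{defn:we}(ii). First I would define
\[
(-)^! \colon \mcalfb(\mcalbb(\spkm); \mcalm) \to \mcall(\mcalo(\spkm); \mcalm), \quad F \mapsto F^!,
\]
where $F^!$ is as in (\ref{eqn:fsrik}). The first thing to verify is that $F^!$ actually lands in $\mcall(\mcalo(\spkm); \mcalm)$, i.e.\ that $F^!$ is $\mcalb$-good and $\mcalb$-linear. For $\mcalb$-linearity one needs the canonical map $F^! \to (F^!|\mcalbb(\spkm))^!$ to be a weak equivalence; this should follow from the cofinality of $\mcalbb(U)$ inside the indexing category computing $(F^!|\mcalbb(\spkm))^!(U)$, or more directly from the fact that $F^!$ restricted to $\mcalbb(\spkm)$ is weakly equivalent to $F$ itself (each $V \in \mcalbb(\spkm)$ is a terminal-ish object of $\mcalbb(V)$ after passing to $\widehat{V}$, using Remark~\ref{rmk:objects_Bb} and Lemma~\ref{lem:holim_we}). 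For $\mcalb$-goodness, condition (a) (fibrancy) is automatic for a homotopy limit, and condition (b) requires $F^!|\mcalbb(\spkm) \in \mcalfb(\mcalbb(\spkm); \mcalm)$, which again reduces to the claim $F^!|\mcalbb(\spkm) \simeq F$ together with the hypothesis $F \in \mcalfb(\mcalbb(\spkm); \mcalm)$.

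Next I would check the two composites. The composite $res \circ (-)^!$ sends $F$ to $F^!|\mcalbb(\spkm)$, and the canonical natural transformation $F \to F^!|\mcalbb(\spkm)$ (restriction of the unit of the Kan extension) is an objectwise weak equivalence by the argument just sketched: for $V \in \mcalbb(\spkm)$, $F^!(V) = \operatorname{holim}_{V' \in \mcalbb(V)} F(V')$, and the inclusion of the subcategory $\{V' \in \mcalbb(V) \mid V' \to \widehat{V}\}$ — or directly the cofinality of $\widehat{V}$ — shows this homotopy limit computes $F(V)$ up to weak equivalence, using that $F$ takes the morphisms of $\mcalw_{\Bb(\spkm)}$ (in particular $\tau_V$ and the maps with $\pi_0$ a bijection) to weak equivalences, and invoking Lemma~\ref{lem:holim_we}. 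So $res \circ (-)^! \simeq id$. The other composite $(-)^! \circ res$ sends $F \in \mcall(\mcalo(\spkm); \mcalm)$ to $(F|\mcalbb(\spkm))^!$, and the canonical map $F \to (F|\mcalbb(\spkm))^!$ is a weak equivalence precisely because $F$ is $\mcalb$-linear. Hence $(-)^! \circ res \simeq id$. Finally, both $res$ and $(-)^!$ preserve weak equivalences: for $res$ this is immediate since weak equivalences of (co)functors are objectwise, and for $(-)^!$ it follows because a natural objectwise weak equivalence $F \to G$ between diagrams that send $\mcalw_{\Bb(\spkm)}$-morphisms to weak equivalences induces an objectwise weak equivalence $F^! \to G^!$ on homotopy limits.

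For part (ii) I would simply restrict all the functors above to the $\abb$-decorated subcategories. One checks that if $F \in \mcalfab(\mcalbb(\spkm); \mcalm)$ then $F^!$ is $\mcalb_{\abb}$-good — this is just the observation that $F^!|\mcalbb(\spkm) \simeq F$ preserves the condition $F(V) \simeq A_i$ when $\deg(V) = i$ — and conversely $res$ of a $\mcalb_{\abb}$-good linear cofunctor lands in $\mcalfab(\mcalbb(\spkm); \mcalm)$ by definition. All the weak equivalences established above restrict, so $res|\mcall_{\abb}$ and $(-)^!|\mcalfab$ are mutually inverse up to weak equivalence.

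The main obstacle I expect is the key reduction $F^!|\mcalbb(\spkm) \simeq F$, i.e.\ computing $\operatorname{holim}_{V' \in \mcalbb(V)} F(V')$ for $V \in \mcalbb(\spkm)$. The poset $\mcalbb(V)$ is not as simple as $\mcalb_k(U)$ was in \cite{paul_don17-2}: it contains objects of type (\ref{eqn:type1}) and (\ref{eqn:type2}), and one must show that the sub-poset witnessing the homotopy-initiality (or the relevant cofinality) is large enough to force the homotopy limit down to $F(V)$, using both kinds of weak equivalences in $\mcalw_{\Bb(\spkm)}$ — the $\pi_0$-bijections and the $\tau$-type maps $V' \hra \theta\lambda(V')$. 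This is exactly where the combinatorics of $\widehat{(-)}$ from Remark~\ref{rmk:objects_Bb} and the geometric-realization contractibility arguments of Section~\ref{sec:linear_func} (Lemma~\ref{lem:BtBb}, Lemma~\ref{lem:hfiber2}) will need to be brought to bear, together with Lemma~\ref{lem:holim_we} to turn a nerve-equivalence of indexing categories into a homotopy-limit equivalence.
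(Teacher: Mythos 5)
Your approach matches the paper's: define the homotopy right Kan extension $(-)^!$ via (\ref{eqn:fsrik}), verify it lands in $\mcall(\mcalo(\spkm); \mcalm)$, check that the two composites with $res$ are connected to the identity by natural weak equivalences, and observe that both functors preserve weak equivalences. However, you have misdiagnosed the ``main obstacle.'' For $V \in \mcalbb(\spkm)$, the poset $\mcalbb(V)$ has $V$ itself as a terminal object (since $V \subseteq V$ and $V \in \mcalbb(\spkm)$), so $F^!(V) = \operatorname{holim}_{V' \in \mcalbb(V)} F(V') \simeq F(V)$ by the elementary terminal-object (homotopy cofinality) argument, which requires nothing beyond fibrancy of $F(V)$. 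There is no need to pass to $\widehat{V}$ via Remark~\ref{rmk:objects_Bb}, and in fact Lemma~\ref{lem:holim_we} is the wrong tool here: that lemma has the hypothesis that $F$ sends \emph{every} morphism of the indexing category to a weak equivalence, which is false for a general $F \in \mcalfb(\mcalbb(\spkm); \mcalm)$ (such an $F$ only inverts the morphisms of $\mcalw_{\Bb(\spkm)}$), whereas the ordinary cofinality theorem for a terminal object has no such hypothesis. The geometric-realization machinery of Lemmas~\ref{lem:BtBb} and \ref{lem:hfiber2} enters only when comparing two different bases $\mcalb$ and $\mcalb'$ in Theorem~\ref{thm:lb_lbp}; within a fixed $\mcalb$ the reduction is immediate, which is why the paper calls it ``straightforward.'' The remainder of your verification --- landing in $\mcall$, the two composite equivalences, preservation of weak equivalences, and the restriction to the $\abb$-decorated subcategories for part (ii) --- is correct and coincides with the paper's proof.
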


\begin{proof}
	To prove the first part, 
	let 
	\[
	(-)^! \colon \mcalfb (\mcalbb(\spkm); \mcalm) \to \mcall (\mcalo(\spkm); \mcalm)
	\]
	 be the functor defined by (\ref{eqn:fsrik}). It is straightforward to check that for every $G$, the cofunctor $G^!$ lands in $\mcall (\mcalo(\spkm); \mcalm)$.	
	It is also straightforward to check that the canonical maps $id \to (-)^! res$ and $id \to res (-)^!$ are both weak equivalences. Clearly, the functors $res$ and $(-)^!$ preserve weak equivalences.  For the second part, the restriction  $(-)^!|\mcalfab (\mcalbb(\spkm); \mcalm)$ does the desired work. 
\end{proof}

Now we are ready to prove the main result of this paper. 
%

\begin{proof}[Proof of Theorem~\ref{thm:main}]
	Consider the  diagram
	\begin{equation} \label{eqn:hl0}
	\xymatrix{\mcalf(\mcalb_k(M); \mcalm) \ar@<1ex>[rrr]^-{\Phi} \ar@<1ex>[d]^{(-)^!} & & & \mcalfb(\mcalbb(\spkm); \mcalm) \ar@<1ex>[lll]^-{\Psi} \ar@<1ex>[d]^{(-)^!} \\
		\mcalp_k(\om; \mcalm) \ar@<1ex>[u]^{res}  & &  &   \mcall(\mcalo(\spkm); \mcalm), \ar@<1ex>[u]^{res}  }
	\end{equation}
	where  
	\begin{enumerate}
		\item[$\bullet$] $\mcalfb(\mcalbb(\spkm); \mcalm)$, $\mcalf(\mcalb_k(M); \mcalm)$, $\mcalp_k(\om; \mcalm)$, and $\mcall(\mcalo(\spkm); \mcalm)$ are the categories from Definitions~\ref{defn:mcalfb_spk}, \ref{defn:isofun}, \ref{defn:mcalpa}, and \ref{defn:mcalla}, 
		\item[$\bullet$] $\Phi$ and $\Psi$ are the functors of Lemma~\ref{lem:mcalbb}. To recall,  $\Phi (G)= G \lambda$ and $\Psi(F)= F \theta$, where $\lambda \colon \Bb(\spkm) \to \mcalb_k(M)$ and $\theta \colon \mcalb_k(M) \to \Bb(\spkm)$ are the functors from Definition~\ref{defn:lamb_theta}, and 
		\item[$\bullet$]  $res$ and $(-)^!$ are the usual restriction and homotopy right Kan extension functors (see the proofs of Lemmas~\ref{lem:PF} and \ref{lem:mcall_mcalfab}). 
	\end{enumerate}
	The functors
	\begin{equation} \label{eqn:theta_lambda}
	    \xymatrix{\mcalp_k(\om; \mcalm) \ar@<1ex>[r]^-{\Theta} & \mcall(\mcalo(\spkm); \mcalm) \ar@<1ex>[l]^-{\Lambda}}
	\end{equation}
	that appear in the introduction are given by $\Theta = (-)^! \Phi res$  and $\Lambda = (-)^!\Psi res$. Combining Lemmas~\ref{lem:mcalbb}, \ref{lem:PF} and \ref{lem:mcall_mcalfab}, we have that the canonical maps $id \to \Lambda \theta$ and $id \to \Theta \Lambda$ are both weak equivalences. The same lemmas imply that $\Theta$ and $\Lambda$ preserve weak equivalences. This proves the theorem. 
\end{proof}


%
%
%

\section{Homogeneous layer of functors out of $\mcalo(\spkm)$}  \label{section:hl}

Suppose $\mcalm$ has a zero object, $*$, and let $G \colon \om \to \mcalm$ be a cofunctor. 
For $r \geq 0$, the $r$th \textit{polynomial approximation} to $G$, denoted $T_r(G) \colon \om \to \mcalm$, can be defined as $T_r (G) (U):= \underset{W \in \mcalb_r(U)}{\text{holim}} G(W)$. The cofunctor $G$ is called \textit{homogeneous of degree $r$} if $G$ is polynomial of degree $\leq r$ (see Definition~\ref{defn:PF}) and  $T_{r-1} (G) \simeq *$. It is well known that the cofunctor $L_r(G) \colon \om \to \mcalm$ defined as 
\[
L_r(G)(U):= \hfib\left( T_r(G)(U) \to  T_{r-1}(G)(U)\right)
\] 
is homogeneous of degree $r$. The cofunctor $L_r(G)$ is commonly called the \textit{$r$th homogeneous layer} of $G$. The goal of this section is to define the concept of homogeneous layer for cofunctors from $\mcalo(\spkm)$ to $\mcalm$. Specifically, let $\Theta$ and $\Lambda$ be the functors of (\ref{eqn:theta_lambda}). 
Given $F \in \mcall(\mcalo(\spkm); \mcalm)$ and $r \leq k$, we will define a new cofunctor $\Lb_r(F) \colon \mcalo(\spkm) \to  \mcalm$ and prove Theorem~\ref{thm:hl} below, which roughly says  that $\Lambda(\Lb_r(F)) \simeq L_r(\Lambda (F))$. (Our definition of $\Lb_r(F)$ (see Definition~\ref{defn:Lb} below) does not involve $\Theta$ or $\Lambda$.) 



From now on,  $F$ is an object of the category  $\mcall(\mcalo(\spkm); \mcalm)$ introduced in Definition~\ref{defn:mcalla}, and $r \leq k$. Consider the homogeneous cofunctor of degree $r$, $L_r(\Lambda (F)) \colon \om \to \mcalm$. Explicitly, 
\begin{equation} \label{eqn:hl01}
L_r(\Lambda(F)) = L_r(\Psi (res(F))^!) = \hfib\left( T_r(\Psi(F)^!) \to T_{r-1}(\Psi(F)^!)\right).
\end{equation}

On the other hand, we need to define $\Lb_r(F)$. First of all, consider the poset $\Bb_0(\spkm)$ from Definition~\ref{defn:Bbp}. Let $\Bb_0^{(r)}(\spkm) \subseteq \Bb_0(\spkm)$ be the full subposet whose objects are those of degree $r$ (see Definition~\ref{defn:degree}). For simplicity, we will write $\Bb^{(r)}(\spkm)$ for $\Bb_0^{(r)}(\spkm)$. 

\begin{defn} \label{defn:Lb}  
	\begin{enumerate}
		\item[(i)] Define a cofunctor $L'_r(F) \colon \Bb^{(r)} (\spkm) \to \mcalm$ as
		\[
		L'_r(F)(V):= \hfib\left( F(\theta \lambda (V) \to F(\theta \lambda (V) \backslash \widehat{V})\right),
		\]
		where $\widehat{V}$ is defined as in Remark~\ref{rmk:objects_Bb}. 
		\item[(ii)] Extend this to a cofunctor $L''_r (F) \colon \Bb(\spkm) \to \mcalm$ defined as $L''_r(F)(V):= L'(F)(V)$ if $V \in \Bb^{(r)} (\spkm)$. Otherwise, $L''_r(F)(V):= *$. For a morphism $f \colon V \to W$ of $\Bb(\spkm)$, define $L''_r(f):= L'_r(f)$ if $f$ is a morphism of $\Bb^{(r)}(\spkm)$. Otherwise, define $L''(F)(f)$ to be the zero morphism. 
		\item[(iii)] Now define $\Lb_r(F)$ as the homotopy right Kan extension of $L''_r(F)$ along the inclusion $\Bb(\spkm) \hra \mcalo (\spkm)$. That is,
		\[
		\Lb_r(F)(V):= \underset{W \in \Bb(V)}{\text{holim}} L''_r(F)(W).
		\]
	\end{enumerate}
\end{defn}

The following is straightforward.

\begin{lem} \label{lem:hl-Lbr}
	The cofunctor $\Lb_r(F)$ belongs to $\mcall(\mcalo(\spkm); \mcalm)$. Moreover, the canonical map $L'_r(F) \to \Lb_r(F)|\Bb^{(r)}(\spkm)$ is a weak equivalence.
\end{lem}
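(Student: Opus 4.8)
The plan is to prove Lemma~\ref{lem:hl-Lbr} in two halves, corresponding to the two assertions. For the first assertion, I would show $\Lb_r(F) \in \mcall(\mcalo(\spkm); \mcalm)$ by checking the defining conditions of Definition~\ref{defn:mcallb}: namely that $\Lb_r(F)$ is $\mcalb$-good and $\mcalb$-linear. Linearity is essentially automatic: $\Lb_r(F)$ is by construction (Definition~\ref{defn:Lb}(iii)) the homotopy right Kan extension $\left(L''_r(F)\right)^{!}$ of a cofunctor on $\Bb(\spkm)$, so the canonical map $\Lb_r(F) \to \left(\Lb_r(F)|\Bb(\spkm)\right)^{!}$ is a weak equivalence by the standard property of homotopy right Kan extensions along a full subcategory (the same fact used in the proof of Lemma~\ref{lem:mcall_mcalfab}). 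For $\mcalb$-goodness I must check (a) fibrancy of all values, which follows since homotopy limits of fibrant diagrams are fibrant (taking a fibrant replacement of $L''_r(F)$ objectwise if needed), and (b) that $L''_r(F)|\Bb(\spkm)$ — equivalently $L''_r(F)$ itself — lies in $\mcalfb(\mcalbb(\spkm); \mcalm)$, i.e.\ sends morphisms of $\mcalw_{\Bb(\spkm)}$ to weak equivalences. Here the key point is that for a weak equivalence $f\colon V \hra V'$ of type (a) in Definition~\ref{defn:mcalw_Bb} (so $\deg V = \deg V'$ and $\pi_0(f)$ a bijection), one has $\deg V = r$ iff $\deg V' = r$, and when both have degree $r$ the induced map $\theta\lambda(V) \to \theta\lambda(V')$ is again such a weak equivalence and restricts to a weak equivalence $\theta\lambda(V)\setminus\widehat V \to \theta\lambda(V')\setminus\widehat{V'}$ (since $\widehat V$ is a union of connected components of $\theta\lambda(V)$ and $\pi_0$ is respected); applying $F$, which is $\mcalb$-good, and taking homotopy fibers gives a weak equivalence $L'_r(F)(V') \to L'_r(F)(V)$. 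For morphisms of type (b), the factorization $V \to W \to V'$ with $V' = \theta\lambda(W)$ forces $\deg V = \deg W = r$ only in the relevant case, and one reduces to type (a) plus the observation that $\theta\lambda(\theta\lambda(W)) = \theta\lambda(W)$, so the second leg is sent to an identity-like map; outside degree $r$ everything is $*$ and the condition is vacuous.

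For the second assertion, that the canonical map $L'_r(F) \to \Lb_r(F)|\Bb^{(r)}(\spkm)$ is a weak equivalence, I would unwind the right Kan extension formula. For $V \in \Bb^{(r)}(\spkm)$ we have $\Lb_r(F)(V) = \underset{W \in \Bb(V)}{\mathrm{holim}}\, L''_r(F)(W)$, where $\Bb(V) = \{W \in \Bb(\spkm) : W \subseteq V\}$. Since $L''_r(F)(W) = *$ unless $\deg W = r$, and the objects $W \subseteq V$ of degree $r$ are cofinal/initial among a suitable subdiagram, the homotopy limit collapses. Concretely, an object $W \in \Bb(V)$ of degree $r$ with $W \subseteq V$ and $\deg V = r$ must have $\pi_0(W \hookrightarrow V)$ a bijection (it cannot hit more components than $V$ has and cannot hit fewer without dropping degree), hence $\widehat W \subseteq V \subseteq \theta\lambda(W)$ and in fact $\theta\lambda(W) = \theta\lambda(V)$; so $L''_r(F)(W) = L'_r(F)(W) \simeq L'_r(F)(V)$ via the weak equivalence established above. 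I would then argue that the full subposet of degree-$r$ objects of $\Bb(V)$ is homotopy-initial (or that the complement consisting of objects $L''_r(F)$ sends to $*$ does not affect the homotopy limit), so that $\Lb_r(F)(V) \simeq L'_r(F)(V)$ naturally. This is morally the same statement as Proposition~\ref{prop:Bbzu}/Lemma~\ref{lem:hfiber} restricted to the degree-$r$ stratum: $|\Bt_0^{(r)}(V)|$ is contractible when $\deg V = r$.

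The main obstacle I anticipate is the bookkeeping in the second half: making precise the claim that the degree-$r$ part of $\Bb(V)$ controls the homotopy limit $\underset{\Bb(V)}{\mathrm{holim}}\, L''_r(F)$. The cleanest route is probably to invoke that $L''_r(F)$ is the left Kan extension (in the "extend by $*$'' sense) of $L'_r(F)$ along $\Bb^{(r)}(\spkm) \hookrightarrow \Bb(\spkm)$, combined with a cofinality argument — but one must be careful because $\Bb^{(r)}(\spkm)$ is not a sieve in $\Bb(\spkm)$ (a degree-$r$ object can map to a lower-degree object via type-(b) morphisms, and lower-degree objects can include into higher ones). The resolution is that within $\Bb(V)$ for $\deg V = r$, every object has degree $\leq r$, and the degree-$r$ objects form a connected, in fact homotopy-trivial, subposet onto which the rest retracts up to the values of $L''_r(F)$; one can formalize this by checking that for each $W \in \Bb(V)$ of degree $< r$ the over/under category relating it to degree-$r$ objects either is empty (contributing nothing since $L''_r(F)(W) = *$) or is contractible. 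Once that cofinality point is pinned down, the weak equivalence $L'_r(F) \to \Lb_r(F)|\Bb^{(r)}(\spkm)$ follows, and naturality is immediate from the compatibility of the $F(\tau_V)$ maps noted in Remark~\ref{rmk:naturality}. I would present the argument in the order: (1) $\mcalb$-linearity of $\Lb_r(F)$; (2) fibrancy; (3) the $\mcalw_{\Bb(\spkm)}$-invariance of $L''_r(F)$, hence $\mcalb$-goodness; (4) the collapse of the right Kan extension on $\Bb^{(r)}(\spkm)$.
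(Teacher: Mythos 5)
Your plan has the right skeleton, but there is a gap in the first half and the second half is substantially over-engineered.

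For the $\mcalw_{\Bb(\spkm)}$-invariance of $L''_r(F)$: the reduction to type-(a) morphisms with $\deg V=\deg V'=r$ is correct, and so is the observation that $\widehat V$ is a union of connected components of $\theta\lambda(V)$. But the step ``applying $F$, which is $\mcalb$-good'' to the induced inclusion $\theta\lambda(V)\setminus\widehat V\hookrightarrow\theta\lambda(V')\setminus\widehat{V'}$ does not follow as stated: these complements are not objects of $\Bb(\spkm)$ (they have neither form (\ref{eqn:type1}) nor form (\ref{eqn:type2})), so $\mcalb$-goodness of $F$ says nothing directly about them. You need $\mcalb$-linearity here. The complement $\theta\lambda(V)\setminus\widehat V$ decomposes as a disjoint union $\coprod_S\widehat{W_S}$ of form-(\ref{eqn:type1}) objects indexed by the nonempty proper subsets $S$ of the generators, so $\Bb(\theta\lambda(V)\setminus\widehat V)=\coprod_S\Bb(\widehat{W_S})$ and linearity gives $F(\theta\lambda(V)\setminus\widehat V)\simeq\prod_S F(\widehat{W_S})$; each $\widehat{W_S}\hookrightarrow\widehat{W'_S}$ is then a type-(a) morphism of $\mcalw_{\Bb(\spkm)}$ to which goodness does apply.

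The second assertion is much simpler than your proposal suggests. Since $\Bb(\spkm)\subseteq\mcalo(\spkm)$ is full, for any $V\in\Bb(\spkm)$ the poset $\Bb(V)$ has $V$ itself as terminal object, so for an objectwise-fibrant cofunctor $G$ the canonical map $G(V)\to\underset{\Bb(V)}{\textup{holim}}\,G$ is a weak equivalence --- this is the same fact that makes $\mathrm{id}\to\mathrm{res}\,(-)^!$ a weak equivalence in the proofs of Lemmas~\ref{lem:PF} and \ref{lem:mcall_mcalfab}. Taking $G=L''_r(F)$ and $V\in\Bb^{(r)}(\spkm)$ immediately yields $L'_r(F)(V)=L''_r(F)(V)\stackrel{\sim}{\longrightarrow}\Lb_r(F)(V)$. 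No cofinality analysis of the degree-$r$ stratum, and no appeal to Proposition~\ref{prop:Bbzu}, is needed; the same terminality fact also handles the $\mcalb$-linearity of $\Lb_r(F)=(L''_r(F))^!$ in the first assertion. Finally, your intermediate claim ``$\theta\lambda(W)=\theta\lambda(V)$'' for $W\subseteq V$ with $\deg W=\deg V$ is false in general, since $\lambda(W)$ may be a strictly smaller union of balls; this happens not to matter once you argue via terminality, but it would be an error on the route you sketched. (The paper offers no proof, calling the lemma straightforward; the terminality argument is what makes it so.)
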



To define the second homogeneous cofunctor  $\om \to \mcalm$ of degree $r$ we are interested in,  let $\mcalb^{(r)}(M) \subseteq \mcalb_k(M)$ be the subposet whose objects are unions $U = B_1 \cup \cdots \cup B_r$ of exactly  $r$ pairwise disjoint elements from  $\mcalb$, and whose morphisms are isotopy equivalences. In \cite[Lemma 6.5]{paul_don17-2} we prove that homogeneous cofunctors $\om \to \mcalm$ of degree $r$ are determined by their values on   $B^{(r)}(M)$. Specifically, we construct two functors
\[
\xymatrix{\mcalf(\mcalb^{(r)}(M); \mcalm) \ar@<1ex>[r]^-{\psi}   &  \mcalh_r(\om; \mcalm) \subseteq \mcalp_k(\om; \mcalm) \ar@<1ex>[l]^-{\phi}}
\]
between the category of isotopy cofunctors $\mcalb^{(r)}(M) \to \mcalm$ and that of good homogeneous cofunctors of degree $r$, and we show the following result.

\begin{lem} \cite[Lemma 6.5]{paul_don17-2} \label{lem:hl-phipsi}
	The functors $\phi$ and $\psi$ preserve weak equivalences. Furthermore, the canonical maps $id \to \phi \psi$ and $id \to \psi\phi$ are both weak equivalences.  
\end{lem}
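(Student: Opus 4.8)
This is \cite[Lemma 6.5]{paul_don17-2}, so the plan is to reproduce that argument; I sketch it here. Recall the shape of the two functors. The functor $\phi$ records the values of a good homogeneous cofunctor $H$ of degree $r$ on the degree-$r$ stratum $\mcalb^{(r)}(M)$ together with their isotopy-equivalence structure, i.e.\ $\phi(H) = H|\mcalb^{(r)}(M)$; this lands in $\mcalf(\mcalb^{(r)}(M); \mcalm)$ because $H$ has fibrant values and sends isotopy equivalences to weak equivalences. The functor $\psi$ rebuilds a homogeneous cofunctor from such data: given $G \in \mcalf(\mcalb^{(r)}(M); \mcalm)$ one first extends $G$ to an isotopy cofunctor $\widetilde{G} \colon \mcalb_r(M) \to \mcalm$ by setting $\widetilde{G}(U) = G(U)$ when $\deg(U) = r$ and $\widetilde{G}(U) = *$ otherwise (the maps attached to inclusions that drop the number of components have target $*$ and so are forced, and on the top stratum one uses $G$), and then sets $\psi(G) := (\widetilde{G})^{!}$, the homotopy right Kan extension along $\mcalb_r(M) \hra \om$. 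By \cite[Theorem 4.2]{paul_don17-2} and \cite[Lemma 5.3]{paul_don17-2} (with $r$ in place of $k$) the cofunctor $\psi(G)$ is good and polynomial of degree $\leq r$; and since, for $W \in \mcalb_r(M)$, the indexing poset $\mcalb_r(W)$ has $W$ as a terminal object, one has $\psi(G)(W) \simeq \widetilde{G}(W)$, which equals $*$ as soon as $\deg(W) < r$, so $T_{r-1}(\psi(G)) \simeq *$ and $\psi(G)$ is homogeneous of degree $r$.

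Preservation of weak equivalences is then formal. For $\phi$ it is immediate, since restricting an objectwise weak equivalence of cofunctors gives one. For $\psi$, a natural weak equivalence $G \to G'$ induces a natural weak equivalence $\widetilde{G} \to \widetilde{G'}$ (the given map on the degree-$r$ stratum, an identity of $*$ below it), and the homotopy limits defining $(-)^{!}$ take objectwise weak equivalences to objectwise weak equivalences because every object occurring is fibrant.

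Next I would produce the two unit transformations. For $\id \to \phi\psi$: if $U \in \mcalb^{(r)}(M)$ then $U$ is terminal in $\mcalb_r(U)$, so the canonical map $G(U) \to \psi(G)(U) = \underset{V \in \mcalb_r(U)}{\text{holim}}\, \widetilde{G}(V)$ is a weak equivalence, and it is natural in $U$; restricting to the degree-$r$ stratum gives $\id \simeq \phi\psi$. For $\id \to \psi\phi$: let $H$ be good and homogeneous of degree $r$ and put $G = \phi(H)$. Since $H$ is good and polynomial of degree $\leq r$, the canonical map $H \to (H|\mcalb_r(M))^{!}$ is a weak equivalence by the degree-$r$ instance of \cite[Theorem 1.1]{paul_don17-2} (cf.\ Lemma~\ref{lem:PF}), so it is enough to compare $H|\mcalb_r(M)$ with $\widetilde{G}$ as isotopy cofunctors on $\mcalb_r(M)$. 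They coincide on the degree-$r$ stratum by construction, and on any $U$ with $\deg(U) < r$ one has $\widetilde{G}(U) = *$ while $H(U) \simeq T_{r-1}(H)(U) \simeq *$, again because $U$ is terminal in $\mcalb_{r-1}(U)$ and $T_{r-1}(H) \simeq *$; hence the evident natural map $H|\mcalb_r(M) \to \widetilde{G}$ is an objectwise weak equivalence. Applying $(-)^{!}$ and composing yields the canonical map $H \to \psi\phi(H)$ and shows it is a weak equivalence.

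The step that takes real work, and the one I expect to be the main obstacle, is not any of these formal manipulations but the construction of $\psi$ itself: one must check that $G \mapsto \widetilde{G}$ is genuinely functorial (functoriality pins down the maps on component-dropping inclusions, but this has to be spelled out coherently) and that $\widetilde{G}$ satisfies the hypotheses under which \cite[Theorem 4.2]{paul_don17-2} and \cite[Lemma 5.3]{paul_don17-2} apply. Granting that, everything else follows from homotopy invariance of homotopy limits, the cofinality observation that $U$ is terminal in $\mcalb_s(U)$ whenever $U \in \mcalb_s(M)$, and the classification of good polynomial cofunctors of degree $\leq k$ by their restrictions to $\mcalb_k(M)$.
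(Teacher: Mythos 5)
This lemma is cited from \cite[Lemma 6.5]{paul_don17-2} rather than proved in the present paper, so there is no proof here to compare against directly; I can only evaluate your reconstruction on its own terms and against the description of $\phi$ and $\psi$ given just before the statement. With that caveat, your argument looks correct and is consistent with the setup: the extension $G \mapsto \widetilde{G}$ (the paper's $G'$) followed by homotopy right Kan extension matches the displayed formula for $\psi$; terminality of $U$ in $\mcalb_s(U)$ for $U \in \mcalb_s(M)$ gives both the homogeneity of $\psi(G)$ (since $T_{r-1}\psi(G)(W) \simeq \widetilde{G}(W) = *$ below the top stratum) and the unit $\id \to \phi\psi$; and the unit $\id \to \psi\phi$ correctly routes through $H \srel (H|\mcalb_r(M))^!$ (Theorem 1.1 of the cited paper, at degree $r$) together with the objectwise weak equivalence $H|\mcalb_r(M) \to \widetilde{\phi(H)}$, whose naturality only uses that $*$ is terminal. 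Two small points worth making explicit if you were to write this out fully: the claim that $\widetilde{G}$ is an isotopy cofunctor rests on the fact that an isotopy equivalence in $\mcalb_r(M)$ cannot change the number of components, so it never crosses the $\deg = r$ boundary where the two branches of the definition of $\widetilde{G}$ meet; and the identification $T_{r-1}\psi(G) \simeq *$ uses that the homotopy limit of a diagram constant at the terminal object is again the terminal object (true because holim is a cotensor by a simplicial set and $*$ is terminal), not merely that each value is $*$.
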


In fact, $\phi$ is just the restriction functor, and $\psi$ is a bit more subtle. For $G \in \mcalf(\mcalb^{(r)}(M); \mcalm)$,  $\psi(G)(U) = \underset{V \in \mcalb_r(U)}{\text{holim}} G'(V)$, where $G' \colon \mcalb_r(M) \to \mcalm$ is the cofunctor given by $G'(V) = G(V)$ if $V \in \mcalb^{(r)}(M)$, and $*$ otherwise. For a morphism $f$ of  $\mcalb_r(M)$, $G'(f) = G(f)$ if $f$ belongs to $\mcalb^{(r)}(M)$. Otherwise, $G'(f)$ is the zero morphism. 

\begin{defn} \label{defn:hl1}
	Let $F$ be an object of $\mcall(\mcalo(\spkm); \mcalm)$. Consider the restriction cofunctor $res(\Lb_r(F)) \colon \Bb^{(r)}(\spkm) \to \mcalm$. Also consider the cofunctor $\Psi (res (\Lb_r(F))) \colon \mcalb^{(r)}(M) \to \mcalm$ given by 
	\[
	\Psi (res (\Lb_r(F))) (U) = res(\Lb_r(F)) (\theta (U)) = \Lb_r(F)(\theta (U)).
	\]
	The image of this latter cofunctor under $\psi$ gives a homogeneous cofunctor $\psi \left(\Psi (res (\Lb_r(F))) \right) \colon \om \to \mcalm$ of degree $r$. Though $\psi$ is not a genuine Kan extension, we will write
	\begin{equation} \label{eqn:hl3}
	\Psi (res (\Lb_r(F)))^!:= \psi \left(\Psi (res (\Lb_r(F))) \right). 
	\end{equation}
\end{defn}

\begin{thm} \label{thm:hl}
	Suppose $\mcalm$ is a simplicial model category that has a zero object (see Remarkk~\ref{rmk:hl} below).  Let $F \in \mcall(\mcalo(\spkm); \mcalm)$. Then the cofunctors $L_r(\Psi (res (F))^!)$ and $\Psi (res(\Lb_r(F)))^!$ from (\ref{eqn:hl01}) and (\ref{eqn:hl3}) respectively  are connected by a zigzag of weak equivalences in the category of homogeneous cofunctors $\om \to \mcalm$ of degree $r$. That is, 
	\[
	L_r(\Psi (res (F))^!) \simeq \Psi (res(\Lb_r(F)))^! \quad \text{or just} \quad L_r(\Psi (F)^!) \simeq \Psi (\Lb_r(F))^!. 
	\]
\end{thm}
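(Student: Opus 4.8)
The plan is to reduce the statement to a comparison that lives entirely on the small posets $\mcalb^{(r)}(M)$ and $\Bb^{(r)}(\spkm)$, where the functor $\psi$ and the Kan-extension machinery can be traced through by hand. Since both $L_r(\Psi(F)^!)$ and $\Psi(\Lb_r(F))^!$ are homogeneous cofunctors of degree $r$, Lemma~\ref{lem:hl-phipsi} tells us that they are determined, up to the zigzag of weak equivalences we are allowed, by their restrictions to $\mcalb^{(r)}(M)$; i.e. it suffices to prove that $\phi(L_r(\Psi(F)^!))$ and $\phi(\Psi(\Lb_r(F))^!)$ are weakly equivalent as objects of $\mcalf(\mcalb^{(r)}(M); \mcalm)$. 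The right-hand side restricts, essentially by the definition~(\ref{eqn:hl3}) and Lemma~\ref{lem:hl-phipsi}, to $U = B_1 \cup \cdots \cup B_r \mapsto \Lb_r(F)(\theta(U))$, and by Lemma~\ref{lem:hl-Lbr} this is weakly equivalent to $L'_r(F)(\theta(U)) = \hfib\bigl(F(\theta\lambda\theta(U)) \to F(\theta\lambda\theta(U)\backslash\widehat{\theta(U)})\bigr)$. Using $\lambda\theta = id$ (Remark~\ref{rmk:lt=id}) and the fact that for $U \in \mcalb^{(r)}(M)$ the object $\theta(U)$ is already of the form (\ref{eqn:type1}), so $\widehat{\theta(U)} = \theta(U)$, this simplifies to $\hfib\bigl(F(\theta(U)) \to F(\theta(U)\backslash\theta(U))\bigr)$ — here I need to interpret $\theta(U)\backslash\widehat{V}$ correctly: the complement is taken inside $\spkm$, and $\theta\lambda(V)\backslash\widehat V$ removes the one component $\widehat V$ from $\theta\lambda(V)$, so for $V=\theta(U)$ the right-hand term becomes $F$ of the lower-degree strata, which I should identify explicitly.

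Next I would unfold the left-hand side. By~(\ref{eqn:hl01}) and the definition of $T_r, T_{r-1}$, we have $L_r(\Psi(F)^!)(U) = \hfib\bigl( \mathrm{holim}_{W \in \mcalb_r(U)} \Psi(F)^!(W) \to \mathrm{holim}_{W \in \mcalb_{r-1}(U)} \Psi(F)^!(W)\bigr)$, and since $\Psi(F)^! = (F\theta)^!$ is the homotopy right Kan extension of $F\theta$ from $\mcalb_k(M)$ — combined with the $\mcalb$-linearity of $F$, which lets us replace $F$ by $(F|\Bb(\spkm))^!$ and hence compute $\Psi(F)^!(W) \simeq F(\theta(W))$ when $W \in \mcalb_r(M)$, since $\theta(W) \in \Bb(\spkm)$. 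So the inner homotopy limits are over the posets $\mcalb_r(U)$, $\mcalb_{r-1}(U)$ of the values $F(\theta(W))$. The crux is then to show this homotopy fiber of homotopy limits agrees, naturally in $U \in \mcalb^{(r)}(M)$, with the homotopy fiber $F(\theta(U)) \to F(\text{lower strata})$ computed above. For $U$ a union of exactly $r$ balls, $\mcalb_r(U)$ is a poset of configurations inside $U$: I would argue that the "new" part — the difference between the degree-$r$ and degree-$(\le r-1)$ holims — is concentrated on the top stratum and is captured precisely by $\widehat{\theta(U)} = \theta(U)$ itself, since any $W \in \mcalb_r(U)$ with $\deg(W) = r$ must have $W = U$ (as $U$ is itself a union of exactly $r$ disjoint basic opens and the balls of $W$ sit disjointly inside those of $U$). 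This is the key bookkeeping step, and it mirrors the standard identification of the $r$th layer of a polynomial functor with a functor supported on configuration space, as in \cite{wei99}.

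The main obstacle I anticipate is making the homotopy-fiber comparison strictly natural and gluing it into a genuine zigzag of natural transformations of cofunctors out of $\om$, rather than just an objectwise weak equivalence: homotopy fibers and homotopy limits are only functorial up to coherent homotopy, so I would either work in the simplicial model category $\mcalm$ using functorial (co)fibrant replacements and mapping-path-space models for $\hfib$ throughout, so that all constructions are honestly functorial, or route the comparison through the category $\mcalh_r(\om;\mcalm)$ and Lemma~\ref{lem:hl-phipsi}, which already packages the required naturality. Concretely, the cleanest route is: (1) restrict both sides to $\mcalb^{(r)}(M)$; (2) show both restrictions are weakly equivalent to the cofunctor $U \mapsto L'_r(F)(\theta(U))$ in $\mcalf(\mcalb^{(r)}(M); \mcalm)$ — the right side by Lemma~\ref{lem:hl-Lbr} and the simplification above, the left side by the stratum analysis plus $\mcalb$-linearity; (3) apply $\psi$ and Lemma~\ref{lem:hl-phipsi} to lift the weak equivalence back to homogeneous cofunctors of degree $r$ on $\om$. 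A secondary technical point is checking that $L'_r(F)$, and hence the various restrictions, indeed land in $\mcalf(\mcalb^{(r)}(M);\mcalm)$ (isotopy-invariance and fibrancy), but this follows from $F$ being $\mcalb$-good together with the fact that $\hfib$ of a map of fibrant objects is fibrant in a simplicial model category.
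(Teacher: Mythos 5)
Your overall skeleton agrees with the paper's: reduce via Lemma~\ref{lem:hl-phipsi} to comparing restrictions on $\mcalb^{(r)}(M)$, unfold the left side to $\hfib\bigl(F(\theta(U)) \to \holim_{W \in \mcalb_{r-1}(U)} F(\theta(W))\bigr)$ using the terminal-object trick and $\mcalb$-linearity, and unfold the right side through Lemma~\ref{lem:hl-Lbr} to a homotopy fiber involving $\widehat{\theta(U)}$. However, there are two concrete factual errors and one missing key step.

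First, the claim that for $U \in \mcalb^{(r)}(M)$ the object $\theta(U)$ is of the form~(\ref{eqn:type1}) and hence $\widehat{\theta(U)} = \theta(U)$ is false. By Definition~\ref{defn:lamb_theta}, $\theta(U) = U^k/\stk$ is precisely the form~(\ref{eqn:type2}) object generated by the components of $U$ (the index $g$ runs over \emph{all} maps $\{1,\dots,k\}\to\{1,\dots,r\}$, not just surjections); the paper's own Figure~\ref{fig:mcalbb} illustrates this with $\theta(B_1 \cup B_2) = W$ of form~(\ref{eqn:type2}) and $\widehat{W} = V \neq W$. The object $\widehat{\theta(U)}$ is the proper form-(\ref{eqn:type1}) subspace (the top stratum), and the complement $\theta(U)\backslash\widehat{\theta(U)}$ is the nonempty union of lower strata. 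Under your misidentification the right-hand homotopy fiber degenerates and the comparison you want to make collapses.

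Second, the assertion that any $W \in \mcalb_r(U)$ with $\deg(W) = r$ must equal $U$ is also false: $\mcalb_r(U)$ consists of unions of at most $r$ pairwise disjoint elements of $\mcalb$ contained in $U$, and one can always shrink each component of $U$ to a proper basic subball and get a $W \subsetneq U$ of degree $r$. So the "top stratum is captured by $U$ itself" bookkeeping does not hold in the form you state it. Finally, the genuine content of the comparison is a cofinality argument that you gesture at but do not supply: the paper shows that $\theta \colon \mcalb_{r-1}(U) \to \Bb(\theta(U)\backslash\widehat{\theta(U)})$ is homotopy right cofinal (each under category $V\downarrow\theta$ has a terminal object, namely the union of components of $U$ meeting $\lambda(V)$), and combines this with $\mcalb$-linearity of $F$ to identify $F(\theta(U)\backslash\widehat{\theta(U)})$ with $\holim_{W\in\Bb(\theta(U)\backslash\widehat{\theta(U)})} F(W)$ and hence with $\holim_{W\in\mcalb_{r-1}(U)} F(\theta(W))$. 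That step is where the theorem actually gets proved, and it is absent from your sketch.
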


\begin{proof}
	Since the cofunctors $L_r(\Psi(F)^!)$ and $\Psi(\Lb_r(F))^!$ are both homogeneous of degree $r$, by \cite[Lemma 6.5]{paul_don17-2}, it is enough to show that they  agree (up to equivalence) on $\mcalb^{(r)}(M)$. Let $U \in \mcalb^{(r)}(M)$. The idea is to construct a zigzag $L_r(\Psi(F)^!) (U) \stackrel{\sim}{\lla} \cdots \stackrel{\sim}{\lra} \Psi(\Lb_r(F))^!(U)$ of weak equivalences natural in $U$.  Consider the following diagram. 
	\[
	\xymatrix{L_r(\Psi(F)^!)(U)   &   \hfib\left( T_r (\Psi(F)^!)(U) \to T_{r-1}(\Psi(F)^!)(U) \right) \ar[l]_-{=} \\
		\hfib\left( \Psi(F)^! (U) \to T_{r-1}(\Psi(F)^!)(U)\right) \ar[ru]^-{\sim} &  \hfib\left( \Psi(F)(U) \to T_{r-1}(\Psi(F)^!)(U)\right) \ar[l]_-{\sim} \\
		\hfib\left( F(\theta(U)) \to T_{r-1}(\Psi(F)^!)(U)\right) \ar[ru]^-{=} &  \hfib\left( F(\theta(U)) \to \underset{W \in \mcalb_{r-1}(U)}{\text{holim}} \Psi(F)^!(W)\right) \ar[l]_-{=} \\
		\hfib\left( F(\theta(U)) \to \underset{W \in \mcalb_{r-1}(U)}{\text{holim}} \Psi(F)(W)\right) \ar[ru]^-{\sim} &  \hfib\left( F(\theta(U)) \to \underset{W \in \mcalb_{r-1}(U)}{\text{holim}} F(\theta(W))\right) \ar[l]_-{=}}
	\]
	Here \lq\lq $=$\rq\rq{} stands for the identity morphism. The second map is induced by the canonical map $\Psi(F)^!(U) \to T_r (\Psi(F)^!)(U) = \underset{W \in \mcalb_r(U)}{\text{holim}}  \Psi(F)^!(W)$, which comes from the fact that $U$ is the terminal object of $\mcalb_r(U)$. The same fact implies that this latter map is a weak equivalence. Using the same argument, we obtain the third and sixth maps. It is straightforward to check that all these maps are natural in $U$. 
	
	
	On the other hand, consider the following diagram. 
	\[
	\xymatrix{ \Psi(\Lb_r(F))^!(U)  & \Psi(\Lb_r(F))(U) \ar[l]_-{\sim} \\  \Lb_r(F)(\theta(U)) \ar[ru]^-{=} & L'_r(F)(\theta(U)) \ar[l]_-{\sim} \\
		\hfib \left( F (\theta(U)) \to F(\theta(U) \backslash \widehat{\theta(U)})\right) \ar[ru]^-{=} \ar[r]^-{\sim} &   \hfib \left( F (\theta(U)) \to res(F)^!(\theta(U) \backslash \widehat{\theta(U)})\right) \ar[d]^-{=} \\
		   &
		\hfib \left( F (\theta(U)) \to \underset{W \in \Bb(\theta(U) \backslash \widehat{\theta(U)})}{\text{holim}} F(W)\right) }
	\]
	The first map comes from the part of Lemma~\ref{lem:hl-phipsi} saying that the canonical map $id \to \phi \psi$ is a weak equivalence. The third comes from Lemma~\ref{lem:hl-Lbr}. The  fifth map is induced by the map $F \to res(F)^!$, where $res$ and $(-)^!$ are the functors from (\ref{eqn:hl0}).  This latter map is a weak equivalence by Lemma~\ref{lem:mcall_mcalfab}. Again, it is straightforward to check that all these maps are natural in $U$. 
	
	
	Now consider the following commutative triangle.
	\[
	\xymatrix{\mcalb_{r-1} (U) \ar[rr]^-{F \theta} \ar[d]_-{\theta}  &   &  \mcalm  \\
		\Bb(\theta(U) \backslash \widehat{\theta(U)}) \ar[rru]_-{F} &  &  }
	\]
	This  induces a  map 
	$
	\underset{W \in \Bb(\theta(U) \backslash \widehat{\theta(U)})}{\text{holim}} F(W) \to \underset{W \in \mcalb_{r-1}(U)}{\text{holim}} F(\theta(W)),
	$
	which is a weak equivalence since $\theta$ is homotopy right cofinal. (Indeed,  for every $V \in \Bb(\theta(U) \backslash \widehat{\theta(U)})$, the under category $V \downarrow \theta$  has a terminal object, namely $V'$, where $V'$ is the union of the components of $U$ containing $\lambda(V)$.) One can see that this latter map induces a weak equivalence
	\[
	\hfib \left( F (\theta(U)) \to \underset{W \in \Bb(\theta(U) \backslash \widehat{\theta(U)})}{\text{holim}} F(W)\right)  \stackrel{\sim}{\lra}  \hfib\left( F(\theta(U)) \to \underset{W \in \mcalb_{r-1}(U)}{\text{holim}} F(\theta(W))\right),
	\]
	which is also natural in $U$. This proves the theorem. 
\end{proof}

\begin{rmk} \label{rmk:hl}
	By inspection, one can notice that in Theorem~\ref{thm:hl} the hypothesis that $\mcalm$ has a zero object is needed for all $2 \leq r \leq k$, but not for $r =1$.  
\end{rmk}

\addcontentsline{toc}{section}{References}

\textsf{University of Regina, 3737 Wascana Pkwy, Regina, SK S4S 0A2, Canada\\
Department of Mathematics and Statistics\\}
\textit{E-mail address: pso748@uregina.ca}

\textsf{University of Regina, 3737 Wascana Pkwy, Regina, SK S4S 0A2, Canada\\
Department of Mathematics and Statistics\\}
\textit{E-mail address: donald.stanley@uregina.ca}

\end{document}